\newtheorem{theorem}{Theorem}[section]
\newtheorem{definition}[theorem]{Definition}
\newtheorem{remark}[theorem]{Remark}
\newtheorem{corollary}[theorem]{Corollary}
\newtheorem{lemma}[theorem]{Lemma}
\def\Ad{\hbox{\rm Ad}}
\def\<{\,<\!}
\def\>{\!>\,}
\def \Ad {{\rm{Ad}}}
\def \exp {{\rm exp}}
\begin{document} 
	
	\title{Roots of elements for groups over local fields}
	\author{Parteek Kumar and Arunava Mandal}

 \maketitle

\begin{abstract}
Let $\mathbb F$ be a local field and $G$ be a linear algebraic group defined over $\mathbb F$. For $k\in\mathbb N$, let $g\to g^k$  be the $k$-th power map $P_k$ on $G(\mathbb F)$. The purpose of this article is two-fold. First, we study the power map on real algebraic group. We characterise the density of the images of the power map $P_k$ on $G(\mathbb R)$ in terms of Cartan subgroups. Next we consider the linear algebraic group $G$ over non-Archimedean local field $\mathbb F$ with any characteristic. If the residual characteristic of $\mathbb F$ is $p$, and an element admits $p^k$-th root in $G(\mathbb F)$ for each $k$, then we prove that some power of the element is unipotent. In particular, we prove that an element $g\in G(\mathbb F)$ admits roots of all orders if and only if $g$ is contained in a one-parameter subgroup in $G(\mathbb F)$. Also, we extend these results to all linear algebraic groups over global fields.

\end{abstract}

\noindent {\it Keywords:}{ Power maps of algebraic groups, roots of elements, exponentiality.}

\section{Introduction}

Density of the images of the $k$-th power map on connected Lie groups has been considered in \cite{BhM}, \cite{Ma1}, and \cite{MaS}.  One of the primary motivations for these studies is to establish the relation between density of power maps and `weak exponentiality' (i.e the image of the exponential map being dense) of a connected Lie group (see \cite{BhM}), the latter property has attracted much attention in the past. In this context, we mention that there are related studies on surjectivity of the power map, and it is known that a connected real Lie group is exponential if and only if all power maps are surjective. It is also proved that the density and surjectivity of the power map is equivalent for algebraic groups over non-Archimedean local field $\mathbb Q_p$ (\cite{MR}). However, the density and surjectivity of the power maps differ on (real) Lie groups. There are various structural results obtained in this direction.

Criteria for density of the images of the power map in a connected Lie group have been given in terms of regular elements, Cartan subgroups, as well as minimal parabolic subgroups (see \cite{BhM}, \cite{Ma1}). In \cite{Ma2}, the same themes were pursued for disconnected real algebraic groups $G(\mathbb R)$ and the condition for the density of the images of the power map was given in terms of the density of the images of the power map on its connected component $G(\mathbb R)^*$ (with respect to real topology) and surjectivity in the quotient $G(\mathbb R)/G(\mathbb R)^*$, the former situation is well studied in \cite{BhM} and \cite{Ma1}. It is shown that the power map is dense on $G(\mathbb R)$ if and only if it is dense on its Levi part $L(\mathbb R)$. Therefore, for  further studies we can concentrate the (disconnected) reductive groups. This is further equivalent to the surjectivity of the power map on the set of semisimple elements of the group (see \cite{Ma2}).

Here we describe a criterion for density of the images of $P_k$ on the reductive algebraic group $G$ over $\mathbb R$, which is not necessarily Zariski connected, through the Cartan subgroup in $G(\mathbb R)$. The notion of Cartan subgroups for general (not necessarily connected) reductive group is introduced in \cite{Mo}, motivated by a definition in the case of compact Lie group given in \cite{BD}.

\begin{theorem}\label{T}
	Let $G$ be a complex algebraic group defined over $\mathbb R$, with $G^0$ is reductive. 
	Let $k\in\mathbb N$. Then the following statements are equivalent.
	\begin{enumerate}
		\item The image of $P_k:G(\mathbb R)\to G(\mathbb R)$ is dense.
		
		\item $P_k:C(\mathbb R)\to C(\mathbb R)$ is surjective for any Cartan subgroup $C$ of $G$ defined over $\mathbb R$.
		
		\item If $S(C(\mathbb R))$ is a union of Cartan subgroups in $G(\mathbb R)$, then $P_k:S(C(\mathbb R))\to S(C(\mathbb R))$ is surjective.
		
		\item If $S(G(\mathbb R))$ is the set of semisimple elements in $G(\mathbb R)$, then $P_k:S(G(\mathbb R))\to S(G(\mathbb R))$ is surjective.
	\end{enumerate}
\end{theorem}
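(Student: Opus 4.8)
The plan is to prove the four statements equivalent by exhibiting the cycle through $(4)$, using $(1)\Leftrightarrow(4)$ as an external input and reducing everything else to the behaviour of $P_k$ on \emph{regular} semisimple elements. The equivalence $(1)\Leftrightarrow(4)$ is already available: as recalled in the introduction, \cite{Ma2} shows that the image of $P_k$ on $G(\R)$ is dense if and only if $P_k$ is surjective on the set $S(G(\R))$ of semisimple elements. The implication $(2)\Rightarrow(4)$ is the easy half. Any semisimple $s\in G(\R)$ is central in $Z_G(s)$, hence lies in every maximal torus of $Z_G(s)^{\circ}$; choosing such a torus defined over $\R$ (which exists because $G^0$ is reductive) produces a Cartan subgroup $C$ defined over $\R$ with $s\in C(\R)$. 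Surjectivity of $P_k$ on $C(\R)$ then yields a $k$-th root of $s$ inside $C(\R)\subseteq G(\R)$, giving $(4)$. In the non-connected case this reduction is run with the Cartan subgroups of \cite{Mo}, using that every quasi-semisimple element lies in one defined over $\R$.

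The crux is $(4)\Rightarrow(2)$, where the genuine difficulty appears: over $\R$ the Cartan subgroups fall into several non-conjugate classes, so a root furnished by $(4)$ might a priori sit in a Cartan subgroup different from the prescribed $C$ (for instance $-I\in SL_2(\R)$ has a square root in the compact torus but none in the split torus). The device that resolves this is to test surjectivity first on regular semisimple elements only. If $s\in C(\R)$ is regular semisimple, its stabiliser in the Weyl group is trivial, so $Z_G(s)=Z_G(s)^{\circ}=C$; since $\mathrm{char}\,\R=0$, any $x\in G(\R)$ with $x^k=s$ is automatically semisimple and commutes with $s$, whence $x\in Z_G(s)=C$. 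Thus for \emph{regular} semisimple $s\in C(\R)$ a $k$-th root in $G(\R)$ is forced to lie in $C(\R)$, and $(4)$ guarantees that the image of $P_k|_{C(\R)}$ contains every regular semisimple element of $C(\R)$.

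It then remains to upgrade this to full surjectivity on $C(\R)$, which I would do by a topological argument. As $C$ is abelian, $P_k$ on $C(\R)$ is a homomorphism whose differential is multiplication by $k$ on the Lie algebra, hence invertible; so $P_k$ is a local diffeomorphism and $P_k(C(\R))$ is an \emph{open} subgroup. The regular semisimple locus is Zariski-open and dense in $C$, so it meets every connected component of the real Lie group $C(\R)$; an open (hence closed) subgroup meeting every component and containing the identity component must be all of $C(\R)$. This yields $(2)$. (On a split $\cong\R^{\times}$ factor this is precisely where parity of $k$ is detected: negative reals are regular, so their lying in the image forces the image to exhaust $\R^{\times}$.) Finally $(2)\Leftrightarrow(3)$ follows from conjugation-equivariance of $P_k$: surjectivity on $C(\R)$ transports to each conjugate $gC(\R)g^{-1}$, giving surjectivity on any union of Cartan subgroups $S(C(\R))$ and hence $(3)$, while specialising such a union to a single Cartan subgroup recovers $(2)$.

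The step I expect to be the main obstacle is $(4)\Rightarrow(2)$ in the \emph{disconnected} setting, where it must be carried out with Mohrdieck's Cartan subgroups. Two points need care: (a) one must use the correct notion of regular quasi-semisimple element for which the centraliser again collapses onto $C$, so that the root is pinned inside $C$; and (b) such a Cartan subgroup $C$ is in general disconnected, and possibly non-abelian, as a real Lie group, so the clean ``image is an open subgroup'' argument must be replaced by a component-by-component analysis checking that $P_k$ hits every component of $C(\R)$. Establishing that the centraliser of a regular quasi-semisimple element is exactly $C$, and controlling the action of $P_k$ on the component group of $C(\R)$, is where the real work lies; the remaining Archimedean input (density of regular semisimple elements and openness of the power map on the identity component) is standard once this structure is in place.
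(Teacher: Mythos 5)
Your cycle $(2)\Rightarrow(4)\Rightarrow(1)$ and $(2)\Leftrightarrow(3)$ is fine (and matches the paper's easy directions, which go through Lemma \ref{L^*} and the density of semisimple elements). The genuine gap is in your pivotal step $(4)\Rightarrow(2)$, which rests on the claim that for a regular semisimple $s\in C(\R)$ one has $Z_G(s)=C$, so that any $k$-th root of $s$ is trapped in $C(\R)$. That claim is false as stated. Even for Zariski-connected reductive $G$, regularity of $s$ only gives $Z_G(s)^0=T$; the component group $Z_G(s)/T$ is the stabiliser of $s$ in the Weyl group, which need not be trivial when $G$ is not simply connected (e.g.\ the image of $\diag(1,-1)$ in $\PSL_2$ is regular but centralised by the Weyl reflection). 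One can retreat to ``strongly regular'' elements, but in the disconnected setting --- which is the whole point of the theorem --- the Cartan subgroups of Mohrdieck are not maximal tori at all: $C^0$ is only a maximal torus of $Z_{G^0}(s)^0$, and $C$ can even be finite (the $\Z/4\Z$ Cartan in the paper's own example), in which case your ``differential of $P_k$ is multiplication by $k$, hence the image is an open subgroup'' upgrade is vacuous and surjectivity must come entirely from the trapping statement on a dense-in-every-component set of ``regular'' elements. You explicitly defer both the correct notion of regular quasi-semisimple element and the proof that its centraliser collapses onto $C$; but that is precisely the mathematical content of the implication, so what you have is a plan rather than a proof.

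For comparison, the paper avoids regular elements altogether and proves $(1)\Rightarrow(2)$ directly by structure theory: for $s$ generating $C/C^0$ it produces (Lemma \ref{Mo}) an $s$-stable maximal $\R$-torus $T=Z_{G^0}(C^0)\supseteq C^0$; density of $P_k(G(\R))$ gives surjectivity of $P_k$ on $T(\R)$ and coprimality of $k$ with the relevant component-group orders; the norm/averaging trick of Lemma \ref{P} (forming $\tilde x=y\,\Phi_s(y)\cdots\Phi_s^{m-1}(y)$ and using $\gcd(m,k)=1$) then descends surjectivity to $Z_{T(\R)}(s)$, hence to $C^0(\R)$, and a final $\gcd$ argument produces a root of $s$ itself inside $C(\R)$. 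If you want to salvage your route, you would need to prove the trapping lemma for Mohrdieck Cartans (roots of suitable generators of $C/C^0$ lie in $C$) and a density statement for such elements in every real component of $C(\R)$; absent that, the argument does not close.
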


Theorem \ref{T} can be thought as an extension of Theorem 1.1 of \cite{Ma2} and an analogue of Theorem 1.1 of \cite{BhM} in the context of real algebraic groups. In particular, this gives a method to determine density of the images of the power map, as shown by an example in \S2.
 
Now, we consider Zariski connected linear algebraic group $G$ over a non-Archimedean local field 
$\mathbb F$. In \cite{MR}, we studied the surjectivity and dense image of the power map of the linear algebraic group $G(\mathbb F)$. Moreover, we proved that if `all elements' of the algebraic group $G(\mathbb F)$ admit $k$-th root for each $k\in\mathbb N$, then $G(\mathbb F)$ is unipotent and $G(\mathbb F)$ is trivial if $\mathbb F$ is positive characteristic (\cite{MR}, \cite{Ch2}). As noted earlier there has been considerable interest in literature to understand conditions for the $k$-th power map $P_k$ to be surjective for various classes of Lie groups over local fields.
However, there does not seem to be much work on finding the conditions for existence of all the $k$-th root of a given element in a linear algebraic group over non-Archimedean local field $\mathbb F$. In \S 3, we contribute in that direction.

Note that if an element $g$ has $k$-th roots for all $k$ in a real algebraic group $G(\mathbb R)$ then $g$ might be semisimple, or  unipotent or it could be any general element. However, we proved that if $g\in G(\mathbb F)$, where $\mathbb F$ is a non-Archimedean local field, admits all the $k$-th root, then a power of $g$ is unipotent in Theorem \ref{power is unipotent}.

\begin{theorem}\label{power is unipotent}
	Let $\mathbb F$ be a non-Archimedean local field (of any characteristic) with residual characteristic $p$, and $G$ be a linear algebraic $\mathbb F$-group. Suppose that $g\in G(\mathbb F)$ has $p^k$-th roots in $G(\mathbb F)$ for all $k\in \mathbb N$. Then some power of $g$ is unipotent.
\end{theorem}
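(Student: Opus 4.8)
The plan is to reduce the whole statement to a question about eigenvalues in $\mathrm{GL}_n$ and then to exploit the fact that the residual characteristic $p$ is exactly the order of the roots we are handed. Fix a faithful $\mathbb F$-rational embedding $G\hookrightarrow\mathrm{GL}_n$. Since unipotency in $G$ is detected by unipotency in $\mathrm{GL}_n$, and since an element of $\mathrm{GL}_n(\mathbb F)$ is unipotent precisely when all of its eigenvalues equal $1$, it suffices to prove that every eigenvalue $\lambda\in\overline{\mathbb F}^{\times}$ of $g$ is a root of unity. Indeed, if this holds and $m$ denotes the least common multiple of the (finitely many) orders of the eigenvalues of $g$, then $g^{m}$ has all eigenvalues equal to $1$ and is therefore unipotent. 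This formulation has the pleasant feature of bypassing the multiplicative Jordan decomposition, which is convenient because $\mathbb F$ may be imperfect in the positive-characteristic case.

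First I would extract the arithmetic constraint imposed by the roots. Extend the valuation $v$ of $\mathbb F$ (normalized so that $v(\mathbb F^{\times})=\mathbb Z$) to $\overline{\mathbb F}$. If $h_{k}\in G(\mathbb F)\subseteq\mathrm{GL}_n(\mathbb F)$ satisfies $h_{k}^{p^{k}}=g$, then $g$ is a polynomial in $h_{k}$, so the two are simultaneously triangularizable over $\overline{\mathbb F}$; hence each eigenvalue $\lambda$ of $g$ has the form $\mu^{p^{k}}$ for an eigenvalue $\mu$ of $h_{k}$. The decisive point is that $\mu$ is a root of the characteristic polynomial of $h_{k}$, a degree-$n$ polynomial with $\mathbb F$-coefficients, so $\mu$ lies in an extension $L_{k}/\mathbb F$ with $[L_{k}:\mathbb F]\le n$; consequently the ramification index satisfies $e(L_{k}/\mathbb F)\le n$, \emph{bounded independently of $k$}. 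Thus $v(\mu)=v(\lambda)/p^{k}$ lies in $\tfrac{1}{e(L_{k}/\mathbb F)}\mathbb Z\subseteq\tfrac{1}{n!}\mathbb Z$, and letting $k\to\infty$ with $v(\lambda)$ fixed forces $v(\lambda)=0$. Hence every eigenvalue of $g$ is a unit.

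Next I would pin down the unit. Writing the Teichm\"uller decomposition $\lambda=\zeta w$, with $\zeta$ a root of unity of order prime to $p$ and $w\in 1+\mathfrak m$ a principal unit, and likewise $\mu=\eta u$ in $\mathcal O_{L_{k}}^{\times}$, a comparison of principal parts in $\mu^{p^{k}}=\lambda$ yields $w=u^{p^{k}}$ with $u\in 1+\mathfrak m_{L_{k}}$, so that $v(u-1)\ge 1/e(L_{k}/\mathbb F)\ge 1/n$. I claim $w=1$. In characteristic $p$ this is immediate from the Frobenius identity: $w-1=u^{p^{k}}-1=(u-1)^{p^{k}}$, so $v(w-1)=p^{k}\,v(u-1)\ge p^{k}/n\to\infty$, forcing $w=1$. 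In characteristic $0$, expand $u^{p^{k}}-1=\sum_{j\ge 1}\binom{p^{k}}{j}(u-1)^{j}$ and use Kummer's formula $v_{p}\!\binom{p^{k}}{j}=k-v_{p}(j)$; writing $e_{0}=v(p)$ and $a=v_{p}(j)$ one gets, for each term, the lower bound $e_{0}(k-a)+p^{a}\,v(u-1)$, and minimizing over $a$ under $v(u-1)\ge 1/n$ shows $v(u^{p^{k}}-1)\ge e_{0}k-C$ for a constant $C$ independent of $k$. Again $v(w-1)\to\infty$, so $w=1$. Therefore $\lambda=\zeta$ is a root of unity for every eigenvalue, which would complete the argument.

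I expect the technical heart, and the main obstacle, to be the valuation estimate of the last paragraph in the mixed-characteristic case, namely checking that $v(u^{p^{k}}-1)\to\infty$ \emph{uniformly} over all principal units $u$ with $v(u-1)\ge 1/n$; the positive-characteristic case is trivial, but in mixed characteristic one must control the competition between the factor $p^{k}$ coming from the binomial coefficients and the possibility that $u$ is very close to $1$. The bounded ramification $e(L_{k}/\mathbb F)\le n$, forced precisely by the roots lying in $G(\mathbb F)$, is exactly what keeps $v(u-1)$ away from $0$ and makes the estimate uniform. This also clarifies, at a conceptual level, why the hypothesis cannot be relaxed: an eigenvalue such as a $p$-power root of unity $\zeta_{p^{k+1}}$ is infinitely $p$-divisible only through extensions of unbounded ramification, and it is the bounded-degree constraint $[L_{k}:\mathbb F]\le n$ that rules such eigenvalues out.
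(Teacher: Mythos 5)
Your argument is correct, and its skeleton is the same as the paper's: reduce everything to the eigenvalues of $g$ in a fixed embedding $G\hookrightarrow \GL_n$, show they are units, split off the prime-to-$p$ part, and conclude that the remaining principal unit is trivial because it is a $p^k$-th power of a principal unit for every $k$ --- the last step being exactly where the residual characteristic enters. The differences are in the execution, and they mostly work in your favour. Where the paper obtains $|\lambda|=1$ from a distality lemma ($g$ stabilizes a compact open lattice because it is infinitely $p$-divisible), you get it directly from the observation that $v(\lambda)/p^k$ lies in $\tfrac{1}{n!}\Z$ because the eigenvalues of the roots live in extensions of degree at most $n$. The same bounded-ramification observation lets you dispense with the paper's composite field $\mathbb K$ of all extensions of degree at most $n$, which in the paper requires the finiteness of the set of such extensions in characteristic $0$ and forces a separate Case 2 with purely inseparable extensions in characteristic $p$; your argument treats both characteristics uniformly by working in a possibly different bounded-degree field $L_k$ for each $k$. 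Finally, where the paper simply asserts that $\bigcap_k(1+\mathcal P)^{p^k}=\{1\}$, you prove the quantitative version $v(u^{p^k}-1)\ge e_0k-C$ via Kummer's formula, with $C$ uniform over all principal units $u$ satisfying $v(u-1)\ge 1/n$; that uniformity is indeed the point, and you are right to flag it as the technical heart. Two small touch-ups: in the inequality $v(u-1)\ge 1/e(L_k/\mathbb F)$ you should dispose of the trivial case $u=1$ separately, and you should note explicitly that the Teichm\"uller decompositions of $\lambda$ taken in $\mathbb F(\lambda)$ and in $L_k$ agree (by uniqueness of the decomposition in the larger field), so that the component $w$ really is independent of $k$.
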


By a one parameter subgroup in a $p$-adic group $G$ we mean a continuous group homomorphism 
$\Phi:\mathbb Q_p\to G(\mathbb Q_p).$
For a connected real Lie group $G$, M. McCrudden showed that an element $g\in G$ is exponential (i.e., it is contained in a one parameter subgroup) if and only if $g$ has all $k$-th roots. Following a  work by A. Lubotzky and G. Prasad, P. Chatterjee showed that if $\Phi:\mathbb Q\to G(\mathbb Q_p)$ is an abstract group homomorphism then there exists an nilpotent element $X\in L(G(\mathbb Q_p))$ such that $\Phi(t)=\exp(tX)$ for all $t\in \mathbb Q_p$ (see Theorem 5.1 of \cite{Ch2}). In particular, $g\in G(\mathbb Q_p)$  is contained in a one parameter subgroup in $G(\mathbb Q_p)$ if and only if $g$ is unipotent. We reprove the above result in Corollary \ref{1-parameter} by using Theorem \ref{power is unipotent}.
In case, if $\mathbb F$ is of positive characteristic, and if $g\in G(\mathbb F)$ has all $k$-th roots, then we prove $g=e$ in the following Theorem \ref{rts1}. Thus Corollary \ref{1-parameter}, and Theorem \ref{rts1}
can be realised as an analogue of M. McCrudden's result (mentioned above) on algebraic groups over a non-Archimedean local field.

\begin{theorem}\label{rts1}
	Let $G$ be a linear algebraic $\mathbb F$-group over a local field $\mathbb F$ of
	characteristic $p>0$ and let $g\in G(\mathbb F)$.  Let $q$ be any prime number. Suppose for each $k$ there exists $x_k \in G(\mathbb F)$ such that
	$x_k ^{q^{k}} = g$.   Let $H$ be the Zariski-closure of the
	group generated by $g$.  Then 
	\begin{enumerate}
		\item there exists $y_k \in H(\mathbb F)$ such that $y_k ^{q^{k}}=g$.
		
		\item if the order of $g$ is finite, then the order of $g$ is co-prime to $q$.
		
		\item if $g$ has $k$-th roots in $G(\mathbb F)$ for all $k$, then $g=e$.
	\end{enumerate}
\end{theorem}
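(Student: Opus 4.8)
The plan is to push everything into the commutative group $H=\overline{\langle g\rangle}$ and then exploit how restricted the torsion of a linear group over a positive-characteristic local field is. The starting observation is that any $x\in G(\mathbb{F})$ with $x^{q^k}=g$ commutes with $g$, hence with every power of $g$, hence centralises $H$; so every root lies in $C_G(H)(\mathbb{F})$. Moreover $H$, being the Zariski closure of a cyclic group, is commutative and defined over $\mathbb{F}$ (it is the closure of a set of $\mathbb{F}$-points), its identity component $H^0$ is central in $C_G(H)$, and for a single root $x$ the closure $A:=\overline{\langle x\rangle}\subseteq C_G(H)$ is again commutative. On these commutative groups the power maps $P_{q^k}$ are homomorphisms, which is what makes root extraction tractable.

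For part (1), fix $k$ and a root $x=x_k$, and set $A_k:=\overline{\langle x\rangle}\supseteq H$. Since $A_k/H$ is generated, as an algebraic group, by the image $\bar x$ of $x$ and $\bar x^{q^k}=\bar g=e$, it is finite cyclic of $q$-power order $q^{j_k}$ with $0\le j_k\le k$; in particular $A_k^0=H^0$. Hence $h_k:=x^{q^{j_k}}\in H(\mathbb{F})$ and $h_k^{q^{\,k-j_k}}=g$, so for every $k$ the element $g$ already has a $q^{\,k-j_k}$-th root inside $H(\mathbb{F})$. If the integers $k-j_k$ are unbounded then, because a $q^{m}$-th root produces a $q^{m'}$-th root for every $m'\le m$, the element $g$ has $q^k$-th roots in $H(\mathbb{F})$ for all $k$, which is exactly assertion (1).

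The crux, and the step I expect to be the main obstacle, is to exclude the opposite case, where $k-j_k$ is bounded and hence $[A_k:H]=q^{j_k}\to\infty$. Here I would pass to the linear $\mathbb{F}$-group $C_G(H)/H^0$ (a quotient by a central, hence normal, closed $\mathbb{F}$-subgroup): the image of $x_k$ generates the cyclic group $A_k/H^0$, of order $q^{j_k}[H:H^0]\to\infty$, so $C_G(H)/H^0$ would contain $\mathbb{F}$-rational elements whose $q$-primary order $q^{j_k}$ is unbounded. This contradicts the torsion structure over a local field of characteristic $p$. If $q=p$, such an element has $p$-power order, hence is unipotent, and unipotent elements of a linear group over a field of characteristic $p$ have order bounded through a faithful representation by $(I+N)^{p^k}=I+N^{p^k}$; if $q\neq p$, such an element is semisimple, so one of its eigenvalues is a primitive $q^{j_k}$-th root of unity in an extension of $\mathbb{F}$ of bounded degree, whereas $\mathbb{F}=\mathbb{F}_{p^f}((t))$ and all its bounded-degree extensions contain only boundedly many roots of unity of order prime to $p$. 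Either way $q^{j_k}$ is bounded, the desired contradiction, and part (1) follows. The genuine difficulty is organising these torsion bounds uniformly and handling the imperfect field $\mathbb{F}$ when forming $H^0$, $C_G(H)$, and the quotient $C_G(H)/H^0$.

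Parts (2) and (3) are then short. For (2), if $g$ has finite order $m$ then $\langle g\rangle$ is already Zariski closed, so $H=\langle g\rangle$ is cyclic of order $m$; applying (1) with $k=1$ gives $g=y^{q}$ with $y=g^{a}\in\langle g\rangle$, whence $aq\equiv 1\pmod m$ and therefore $\gcd(q,m)=1$, i.e.\ the order of $g$ is prime to $q$. For (3), if $g$ has $k$-th roots for all $k$ then in particular it has $p^k$-th roots for all $k$, so by Theorem \ref{power is unipotent} some power $g^N$ is unipotent; since unipotent elements in characteristic $p$ have finite ($p$-power) order, $g^N$ and hence $g$ has finite order. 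As $g$ has $q^k$-th roots for every prime $q$ and all $k$, part (2) shows the order of $g$ is prime to every prime, forcing $g=e$.
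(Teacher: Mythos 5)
Your argument is correct and follows essentially the same route as the paper: both reduce part (1) to a uniform bound on the $q$-torsion of a linear $\mathbb F$-group obtained by quotienting out ($H$ or its identity component), which forces $x_k^{q^{j}}\in H(\mathbb F)$ for a uniformly bounded exponent $j$, and parts (2)--(3) are then handled exactly as you do, via Theorem \ref{power is unipotent} and the coprimality of the order to every prime. The only real difference is that the paper works in $Z_G(g)/H$ and simply cites the Breuillard--Gelander torsion lemma (Lemma \ref{torsion}), whereas you work in $C_G(H)/H^0$ and re-derive that bound through the unipotent/semisimple eigenvalue analysis.
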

In particular, if $P_k(G(\mathbb F))=G(\mathbb F)$ for all $k$ then it follows from Theorem \ref{a fixed power}, and Theorem \ref{rts1} that $G(\mathbb F)$ is an unipotent group, and moreover $G(\mathbb F)=\{e\}$ if $\mathbb F$ is of positive characteristic (see also \cite{MR}).

In \S 4, we describe various applications of Theorem \ref{power is unipotent}, Corollary \ref{a fixed power}, and Theorem \ref{rts1} to linear algebraic groups over the global field. 

The paper is organised as follows. In \S 2, we deal with real algebraic groups and obtain Theorem \ref{T}. In  \S 3 we deal with algebraic group
over non-Archimedean local fields and deduce Theorem \ref{power is unipotent}, Theorem \ref{rts1}. We extend the results in \S 3 to the linear algebraic group over the global field in \S 4.

\section{Real algebraic group}

Now we recall the definition of a Cartan subgroup of a  complex reductive 
algebraic group $G$, which is not necessarily Zariski connected, from \cite{Mo1}. The definition is motivated from the context of disconnected compact Lie group (see Definition of \cite{BD} for details).

\begin{definition}
	An algebraic subgroup $C$ of $G$ is called a Cartan subgroup
	if the following properties hold:
	\begin{enumerate}
		\item $C$ is diagonalizable,
		
		\item $C$ has finite index in its normalizer (in $G$),
		
		\item $C/C^0$ is cyclic.
	\end{enumerate}
\end{definition}
It is noted in \cite{Mo1} that Cartan subgroup always exists in a reductive group and it is abelian. Moreover, all elements of a Cartan subgroup are semisimple.

We next note the following consequence of Proposition 2.1 of \cite{Mo}. This is easy to prove and hence we omit the details.

\begin{lemma}\label{L^*}
	Let $G$ be a complex reductive algebraic group, not necessarily Zariski-connected, defined over $\mathbb R$. Let $g\in G(\mathbb R)$ be a semisimple element. Then there exists a Cartan subgroup $C$ of $G$ defined over $\mathbb R$ such that $g\in C$.
\end{lemma}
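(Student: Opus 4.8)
The plan is to reduce the statement to the connected reductive centralizer of $g$, where a maximal torus can be chosen over $\mathbb R$, and then to assemble a Cartan subgroup from this torus together with $g$. The only point genuinely beyond Proposition 2.1 of \cite{Mo} (which produces such a Cartan subgroup over $\mathbb C$) is the rationality over $\mathbb R$, which I would obtain by a Galois-descent argument at the end.

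First I would set $H=Z_G(g)$. Since $g\in G(\mathbb R)$, the centralizer $H$ is defined over $\mathbb R$, and because $g$ is semisimple and $G^0$ is reductive, $H^0=Z_{G^0}(g)^0$ is a connected reductive $\mathbb R$-group; as $g$ commutes with every element of $H$, we have $g\in Z(H)$. I would then invoke the theorem that a connected linear algebraic group over a field admits a maximal torus defined over that field to choose a maximal torus $T$ of $H^0$ defined over $\mathbb R$, and set $C=\overline{\langle T,g\rangle}=TA$, where $A=\overline{\langle g\rangle}$. Since $g$ centralizes $T$ and commutes with itself, $C$ is abelian; being generated by the torus $T$ and the semisimple element $g$ it is diagonalizable, which gives property (1), and $g\in C$.

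For the structure of $C$, note that $A$ is diagonalizable with $A/A^0$ cyclic, and $A\subseteq Z(H)$ forces its identity component $A^0$ to be a central torus of $H^0$, hence $A^0\subseteq T$. Therefore $C^0=TA^0=T$ and $C/C^0\cong A/(A\cap T)$, a quotient of the cyclic group $A/A^0$, so property (3) holds. It remains to check that $C$ has finite index in $N_G(C)$, i.e. that $N_G(C)^0=C^0=T$. Any $x\in N_G(C)^0$ normalizes $T$ and, being in a connected group, centralizes it, so $x\in Z_{G^0}(T)$; moreover $N_G(C)^0$ acts trivially on the finite group $C/C^0$, so $xgx^{-1}=gt$ for some $t\in T$. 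Writing $n=|C/C^0|$ and $t_0=g^n\in T$, and using that $x$ fixes $T$ pointwise while $C$ is abelian, I get $t_0=x t_0 x^{-1}=(gt)^n=t_0t^n$, whence $t^n=e$. Thus $x\mapsto t$ is a morphism from the connected group $N_G(C)^0$ to the finite group $T[n]$, so it is trivial and $x$ centralizes $g$; then $x\in(Z_{G^0}(T)\cap Z_{G^0}(g))^0=T$, since $T$ is a maximal torus of the reductive group $H^0$ and is therefore self-centralizing there. This yields $N_G(C)^0=T$ and, together with the finiteness of $G/G^0$, property (2); so $C$ is a Cartan subgroup. (Alternatively one may simply quote Proposition 2.1 of \cite{Mo} for this last point.)

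Finally, for the rationality: $T$ is defined over $\mathbb R$ and $g\in G(\mathbb R)$, so the subgroup $\langle T,g\rangle$ and hence its Zariski closure $C$ are stable under the complex conjugation $\sigma$ generating $\mathrm{Gal}(\mathbb C/\mathbb R)$; a $\sigma$-stable Zariski-closed subgroup has a $\sigma$-stable defining ideal and is therefore defined over $\mathbb R$. This produces a Cartan subgroup $C$ of $G$ defined over $\mathbb R$ with $g\in C$, as required. I expect the finite-index verification (property (2))—that is, controlling $N_G(C)^0$—to be the main obstacle, with the torsion argument above as its crux; the reduction to $H^0$, the existence of an $\mathbb R$-rational maximal torus, and the Galois descent are all routine.
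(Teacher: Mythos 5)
Your argument is correct, but be aware that the paper offers no proof of this lemma at all: it is stated as an easy consequence of Proposition~2.1 of \cite{Mo} (which produces a Cartan subgroup through a given semisimple element of a possibly disconnected reductive group over $\mathbb C$), and the details are omitted. What you have written is a self-contained proof that in effect reconstructs that proposition and then supplies the only genuinely new content of the paper's statement, namely rationality over $\mathbb R$. Your construction --- $C=T\cdot\overline{\langle g\rangle}$ with $T$ an $\mathbb R$-rational maximal torus of $H^0=Z_{G^0}(g)^0$, followed by Galois descent for the conjugation-stable closed subgroup $C$ --- is sound, and the three Cartan axioms are verified correctly: diagonalizability of the commuting product $T\cdot\overline{\langle g\rangle}$, cyclicity of $C/C^0\cong A/(A\cap T)$, and the finite-index property via rigidity of tori ($N_G(T)^0=Z_G(T)^0$), the torsion trick forcing $N_G(C)^0$ to centralize $g$, and the self-centralizing property $Z_{H^0}(T)=T$. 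Two small points deserve explicit justification: (i) the reductivity of $Z_{G^0}(g)^0$ when $g\notin G^0$ rests on Steinberg's theorem on fixed points of semisimple automorphisms of connected reductive groups --- the same input the paper invokes elsewhere through \cite[Lemma 3.2]{Mo1} and Corollary~7.4 of \cite{St}; (ii) the map $x\mapsto g^{-1}xgx^{-1}$ on $N_G(C)^0$ is a morphism of varieties but not a group homomorphism, so the conclusion $t=e$ should be phrased as ``a morphism from a connected variety to a finite set is constant, with value $e$ at $x=e$.'' Neither point affects the validity of the argument; your version has the advantage of making explicit the descent step that the paper leaves implicit.
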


\begin{lemma}\label{Mo}
	Let $G$ be a complex algebraic group defined over $\mathbb R$ with $G^0$ is reductive.  Let $s\in G(\mathbb R)$. Let $C$ be a Cartan subgroup of $G$ defined over $\mathbb R$ such that $sC^0$ generates $C/C^0$. Then there exists a maximal $\mathbb R$-torus $T$ (of $G^0$) containing $C^0$ such that $s\in N_{G}(T)$.
\end{lemma}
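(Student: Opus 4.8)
The plan is to reduce the statement to finding an $s$-stable maximal $\mathbb{R}$-torus inside the centralizer of $C^0$, and then to produce such a torus from the theory of finite-order automorphisms of connected reductive groups. First I would record the structural facts about $C$: being a Cartan subgroup it is abelian, diagonalizable, and consists of semisimple elements, so $C^0$ is a torus, defined over $\mathbb{R}$ because $C$ is, and $s$ is a semisimple element commuting with $C^0$. Since $C$ is diagonalizable, $C/C^0$ is finite; writing $n$ for its order, the hypothesis that $sC^0$ generates $C/C^0$ gives $s^n\in C^0$.

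Next I would pass to $M:=Z_{G^0}(C^0)$. Because $G^0$ is connected reductive and $C^0$ is a torus, $M$ is a connected reductive $\mathbb{R}$-subgroup containing $C^0$ as a central subtorus; hence every maximal torus of $M$ contains $C^0$, and since $M$ has full rank in $G^0$ every maximal torus of $M$ is a maximal torus of $G^0$. Thus it suffices to find a maximal $\mathbb{R}$-torus $T$ of $M$ with $s\in N_G(T)$. Since $s$ centralizes $C^0$ it normalizes $M$, so $\sigma:=\mathrm{Int}(s)|_M$, the restriction to $M$ of conjugation by $s$, is an $\mathbb{R}$-automorphism of $M$; and because $s^n\in C^0\subseteq Z(M)$ we get $\sigma^n=\id$, so $\sigma$ has finite order and in particular is semisimple. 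The goal then becomes the existence of a $\sigma$-stable maximal $\mathbb{R}$-torus of $M$: any such $T$ automatically satisfies $C^0\subseteq T$, as $C^0$ is central, and $sTs^{-1}=\sigma(T)=T$, i.e. $s\in N_G(T)$, and $T$ is maximal in $G^0$.

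The remaining and main point is exactly the existence of a $\sigma$-stable maximal torus defined over $\mathbb{R}$, which I expect to be the crux. Over $\mathbb{C}$ this is Steinberg's theorem: a semisimple automorphism of a connected reductive group stabilizes some maximal torus (indeed a pair $T\subseteq B$). The subtlety is to realize the torus over $\mathbb{R}$, i.e. to find a single maximal torus stable simultaneously under $\sigma$ and under the complex conjugation $\tau$ defining the $\mathbb{R}$-structure; these two commute because $s\in G(\mathbb{R})$. I would obtain it by working on the real group $M(\mathbb{R})$: the finite group $\langle\sigma\rangle$ acts on the contractible, nonpositively curved space of Cartan involutions of $M(\mathbb{R})$, so by a fixed-point argument there is a $\sigma$-stable Cartan involution $\theta$; constructing a fundamental Cartan subalgebra from a $\sigma$-stable maximal abelian subspace of the $(-1)$-eigenspace of $\theta$ yields a $\theta$- and $\sigma$-stable Cartan subalgebra, whose associated maximal torus $T$ is defined over $\mathbb{R}$ and $\sigma$-stable. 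Alternatively one may invoke the corresponding structural results for Cartan subgroups of (possibly disconnected) real reductive groups from \cite{Mo}. This $T$ meets all the requirements, completing the proof.
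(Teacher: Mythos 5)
Your reduction to finding an $\mathrm{Int}(s)$-stable maximal $\mathbb{R}$-torus of $M=Z_{G^0}(C^0)$ is sound, but it follows a genuinely different---and harder---route than the paper, and the step you yourself flag as the crux is left with a real gap. The paper's proof rests on the observation that $Z_{G^0}(C^0)$ is not merely reductive with $C^0$ central: it is itself a torus. By Proposition 2.2 of \cite{Mo} the derived group $[Z_{G^0}(C^0),Z_{G^0}(C^0)]$ is trivial; this is where the hypothesis that $sC^0$ generates $C/C^0$ really enters (via $C^0$ being a maximal torus of $Z_{G^0}(s)^0$, Lemma 3.2 of \cite{Mo1}), whereas you use that hypothesis only to get $s^n\in C^0$, which holds for any $s\in C$. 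Maximality of this torus is then checked by producing an $s$-stable Borel $B\supseteq B'$ via Corollary 7.4 of \cite{St} and showing $Z_B(C^0)=Z_{G^0}(C^0)$. Once $T:=Z_{G^0}(C^0)$ is known to be a maximal torus, the remaining conclusions are automatic: $T$ is canonically attached to $C^0$, hence defined over $\mathbb{R}$ and normalized by $s$. No equivariant torus-hunting is required.

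By treating $M$ as an arbitrary connected reductive group, you instead have to solve the delicate problem of finding a maximal torus stable simultaneously under $\sigma=\mathrm{Int}(s)|_M$ and the real structure, and your sketch of this does not go through as written. The Cartan fixed-point theorem does give a $\sigma$-stable Cartan involution $\theta$ of $M(\mathbb{R})$, but the existence of a $\sigma$-stable maximal abelian subspace of the $(-1)$-eigenspace of $\theta$ (or, what you actually need, of a simultaneously $\theta$- and $\sigma$-stable Cartan subalgebra) is asserted without proof, and that is exactly where the difficulty of the lemma is concentrated; moreover, what you describe would yield a maximally split Cartan subalgebra, not a fundamental one. A correct completion along your lines would take a maximal $\mathbb{R}$-torus $S$ of $(M^{\sigma})^0$ and set $T=Z_M(S)$, invoking Steinberg's result that for a semisimple automorphism such a centralizer is a maximal torus---but that is essentially the same input the paper applies directly to $C^0\subset G^0$, one level up. As it stands, the proposal has a gap at its central step.
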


\begin{proof}
	Note that $C^0$ is a maximal torus of $Z_{G^0}(s)^0$ by \cite[Lemma 3.2]{Mo1}. Let $B'$ be a Borel subgroup of $Z_{G^0}(s)^0$ containing $C^0.$ Then by Corollary 7.4 of \cite{St} there exists a Borel subgroup $B$ in $G^0$ such that $sBs^{-1}=B$, and $B'\subset B.$ Note that $Z_{B}(C^0)$ is a maximal torus in $G^0$. Indeed, set $T=Z_{G^0}(C^0).$ As $C^0$ is a torus in $G^0$ and $G^0$ is reductive, $Z_{G^0}(C^0)$ is reductive. Hence $Z_{G^0}(C^0)=Z[Z_{G^0}(C^0),Z_{G^0}(C^0)]$, where $Z$ is the center of $Z_{G^0}(C^0)$. It is shown in Proposition 2.2 of \cite{Mo} that $[Z_{G^0}(C^0),Z_{G^0}(C^0)]$ is trivial, and hence $Z_{G^0}(C^0)=Z$ is a torus in $G^0$. As $Z_{B}(C^0)\subset Z_{G^0}(C^0)$, $Z_{B}(C^0)$ is a torus in $B$. Moreover, $Z_{B}(C^0)$ is a maximal torus in $B$ containing $C^0.$ Indeed, let $S$ be a maximal torus in $B$ containing $C^0$. Let $x\in S.$ Then $x$ commutes with each element of $C^0$, and hence $x\in Z_S(C^0)\subset Z_B(C^0)$, which implies $S\subset Z_B(C^0)$. As $S$ is maximal and $Z_B(C^0)$ is torus in $B$, $Z_{B}(C^0)$ is a maximal torus in $B$.  Since $Z_{B}(C^0)$ is a maximal torus in $B$, and $B$ is a Borel in $G^0$, $Z_{B}(C^0)$ is a maximal torus in $G^0$, and hence $Z_{B}(C^0)=Z_{G^0}(C^0)=T$.
	Since $C^0$ is defined over $\mathbb R$, $T$ is defined over $\mathbb R$. 
	Further, as $sC^0s^{-1}=C^0$, we have $sTs^{-1}=T.$ Therefore, we have a $\mathbb R$-torus $T$ of $G^0$ such that $C^0\subset T$, and $sTs^{-1}=T.$
\end{proof}

We note a reformulation of a result from \cite{Ch1}, which is useful in proving Theorem \ref{T}. To prove it we adopt a technique from Theorem 5.5 of \cite{Ch1} and Proposition 4 of \cite{St1}.

\begin{lemma}\label{P}
	Let $G$ be a complex algebraic group defined over $\mathbb R$ with $G^0$ reductive. Let $s\in G(\mathbb R)$. Let $T$ be a maximal $\mathbb R$-torus of $G^0$ such that $sTs^{-1}=T$ and $s^m\in T$. Suppose that ${\rm gcd}(m,k)=1$ and $P_k:T(\mathbb R)\to T(\mathbb R)$ is surjective. Then $P_k:Z_{T(\mathbb R)}(s)\to Z_{T(\mathbb R)}(s)$ is surjective.
\end{lemma}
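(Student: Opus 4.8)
The plan is to identify $Z_{T(\mathbb R)}(s)$ as the fixed-point subgroup of a finite-order automorphism of $T(\mathbb R)$, and then push a $k$-th root from $T(\mathbb R)$ into this subgroup by a norm (averaging) argument combined with the coprimality hypothesis. First I would set $\sigma = \Ad(s)|_T$, the automorphism $t \mapsto sts^{-1}$ of $T$. Since $s \in G(\mathbb R)$ normalizes the $\mathbb R$-torus $T$, the map $\sigma$ is an $\mathbb R$-automorphism of $T$ and restricts to a group automorphism of $T(\mathbb R)$. The crucial observation is that $\sigma$ has finite order dividing $m$: because $s^m \in T$ and $T$ is abelian, $\sigma^m(t) = s^m t s^{-m} = t$ for every $t \in T$. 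Moreover $Z_{T(\mathbb R)}(s)$ is precisely the fixed-point group $T(\mathbb R)^{\sigma}$, so it suffices to prove that $P_k$ is surjective on $T(\mathbb R)^{\sigma}$.

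Next I would introduce the norm homomorphism $N\colon T(\mathbb R)\to T(\mathbb R)$, $N(t)=\prod_{i=0}^{m-1}\sigma^i(t)$, which is a well-defined group homomorphism because $T(\mathbb R)$ is abelian (so the factors may be reordered freely). Two properties are immediate. First, $N(t)$ is always $\sigma$-fixed: applying $\sigma$ cyclically permutes the factors and $\sigma^m=\id$, so $\sigma(N(t))=N(t)$, whence $N$ takes values in $T(\mathbb R)^{\sigma}=Z_{T(\mathbb R)}(s)$. Second, for a fixed point $z\in T(\mathbb R)^{\sigma}$ one has $N(z)=z^m$. Now, given $z\in Z_{T(\mathbb R)}(s)$, the assumed surjectivity of $P_k$ on $T(\mathbb R)$ produces some $t\in T(\mathbb R)$ with $t^k=z$. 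Setting $u=N(t)\in Z_{T(\mathbb R)}(s)$ and using that $N$ is a homomorphism gives $u^k = N(t)^k = N(t^k)=N(z)=z^m$.

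Finally I would invoke $\gcd(m,k)=1$: choose integers $a,b$ with $am+bk=1$ and put $w=u^a z^b$, which lies in $Z_{T(\mathbb R)}(s)$ since both $u$ and $z$ do. Then
\[
w^k = u^{ak}z^{bk} = (u^k)^a z^{bk} = (z^m)^a z^{bk} = z^{am+bk} = z,
\]
so $w$ is the required $k$-th root of $z$ inside $Z_{T(\mathbb R)}(s)$, establishing surjectivity of $P_k$ on the centralizer.

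The only genuine content lies in the opening step, namely recognizing that conjugation by $s$ induces an automorphism of \emph{finite order dividing} $m$ on the abelian group $T(\mathbb R)$; this is exactly what makes the norm map $N$ available and forces its image into the centralizer. After that, the argument is a formal manipulation in the abelian group $T(\mathbb R)$, with the B\'ezout identity doing the decisive work of converting the norm relation $u^k=z^m$ into an honest $k$-th root. I do not expect a serious obstacle, but one must take care that $N$ is assembled from $\sigma=\Ad(s)$ itself rather than from any auxiliary data, since it is precisely the $\sigma$-equivariance that keeps the constructed root within $Z_{T(\mathbb R)}(s)$.
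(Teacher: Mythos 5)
Your proof is correct and is essentially the paper's own argument: your norm map $N(t)=\prod_{i=0}^{m-1}\sigma^i(t)$ is exactly the element $\tilde x=y\,\Phi_s(y)\cdots\Phi_s^{m-1}(y)$ constructed in the paper, and the concluding B\'ezout step is identical. The only difference is cosmetic packaging (phrasing the construction as a homomorphism into the fixed-point subgroup rather than a direct computation).
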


\begin{proof}
	Let $x\in Z_{T(\mathbb R)}(s)$. Since $P_k(T(\mathbb R))=T(\mathbb R)$, there exists $y\in T(\mathbb R)$ such that $y^k=x$. As $sTs^{-1}=T$, we define  $\Phi_s:T(\mathbb R)\to T(\mathbb R)$ by $t\to sts^{-1}$ for any $t\in T(\mathbb R)$.
	Let us consider $\tilde x\in T(\mathbb R)$ as follows:
	$$\tilde x=y\Phi_s(y)\Phi_s^2(y)\cdots\Phi_s^{m-1}(y).$$
	Then $\Phi_s(\tilde x)=\tilde x$ and we have
	\begin{eqnarray*}
		\tilde x^k&=&(y\Phi_s(y)\Phi_s^2(y)\cdots\Phi_s^{m-1}(y))^k\\&=&(y(sys^{-1})\cdots(s^{m-1}ys^{-(m-1)}))\cdots (y(sys^{-1})\cdots(s^{m-1}ys^{-(m-1)})).\\
	\end{eqnarray*}
	Since $y$ and $\Phi_s^{i}(y)$ in $T(\mathbb R)$, and $T(\mathbb R)$ is abelian, we have
	$$\tilde x^k=y^k(sy^ks^{-1})\cdots(s^{m-1}y^ks^{-(m-1)}).$$
	Note that $\tilde x^k=(y^k)^m=x^m$ as $y^k=x\in Z_{T(\mathbb R)}(s)$.
	Since $(m,k)=1$, there exist integers $a$ and $b$ such that $am+bk=1$. This gives $x=(x^m)^a(x^b)^k=(\tilde x^ax^b)^k$. Then $\tilde y:=\tilde x^ax^b\in Z_{T(\mathbb R)}(s)$, and $\tilde y^k=x$. This proves the lemma.
\end{proof}

Now we give a proof of Theorem \ref{T}.

\noindent{\it Proof of Theorem \ref{T}}:
$(1)\Rightarrow (2)$ Suppose that the image of $P_k:G(\mathbb R)\to G(\mathbb R)$ is dense. Let $s\in C(\mathbb R)$ be such that $sC^0$ generates $C/C^0$. Our first aim is to show the following.

{\bf Claim:}
$(i)$ $s$ has $k$-th root in $C(\mathbb R)$.

$(ii)$  $P_k:C^0(\mathbb R)\to C^0(\mathbb R)$ is surjective.

{\bf Proof of the Claim:} Since $P_k (G(\mathbb R))$ is dense in $G(\mathbb R)$, by Proposition 2.6 of \cite{Ma2}, we have $P_k(G^0(\mathbb R))$ is dense in $G^0(\mathbb R)$ and $k$ is co-prime to the order of $G(\mathbb R)/G^0(\mathbb R)$. By Lemma \ref{Mo} there exists a maximal $\mathbb R$-torus $T$ (of $G^0$) containing $C^0$ such that $s\in N_{G}(T)$, where  $T=Z_{G^0}(C^0)$. Since $T$ is a maximal $\mathbb R$-torus in $G^0$ and $P_k(G^0(\mathbb R))$ is dense in $G^0(\mathbb R)$ we have $P_k:T(\mathbb R)\to T(\mathbb R)$ is surjective.
Indeed, $P_k(G^0(\mathbb R))$ is dense in $G^0(\mathbb R)$ implies $(k, G^0(\mathbb R)/G^0(\mathbb R)^*)=1$, and $P_k(G^0(\mathbb R)^*)$ is dense in $G^0(\mathbb R)^*.$ Since $T(\mathbb R)/T(\mathbb R)^*$ is a power of $2$, and $(k,2)=1$, we have $P_k:T(\mathbb R)/T(\mathbb R)^*\to T(\mathbb R)/T(\mathbb R)^*$ is surjective. As $T(\mathbb R)^*$ is connected abelian group, $P_k:T(\mathbb R)^*\to T(\mathbb R)^*$ is surjective, and hence $P_k:T(\mathbb R)\to T(\mathbb R)$ is surjective.

Let $|G(\mathbb R)/G(\mathbb R)^*|=m$. Clearly, $k$ is co-prime to $m$. Further, we see that $s^m\in G(\mathbb R)^*\subset G^0$ and $s^m$ commutes with every element of $C^0$, and hence $s^m\in Z_{G^0}(C^0)\subset T$. Therefore $P_k:Z_{T(\mathbb R)}(s)\to Z_{T(\mathbb R)}(s)$ is surjective by Lemma \ref{P}. Note that 
$Z_{T(\mathbb R)}(s)=Z_T(s)(\mathbb R)$. Hence, by Proposition 2.6 of \cite{Ma2} we have $P_k:Z_{T}(s)^0(\mathbb R)\to Z_{T}(s)^0(\mathbb R)$ is surjective. As $C^0=Z_{T}(s)^0$, the second part of the claim follows.

Let $|Z_T(s)(\mathbb R)/Z_T(s)(\mathbb R)^*|=n$. Since $P_k:Z_{T}(s)(\mathbb R)\to Z_{T}(s)(\mathbb R)$ is surjective, we get that ${\rm gcd}(k,n)=1$. As ${\rm gcd}(k,n)=1$ and ${\rm gcd}(k,m)=1$, we have ${\rm gcd}(k,mn)=1$. Recall that $s^m\in T(\mathbb R)$, and hence in $Z_T(s)(\mathbb R)$. So, $s^{mn}\in Z_T(s)(\mathbb R)^*\subset Z_T(s)^0=C^0$. Thus there exists $x\in C^0(\mathbb R)$ such that $x^k=s^{mn}$. Since ${\rm gcd}(k,mn)=1$, there exist integers $a$ and $b$ such that $ak+b(mn)=1$. So,
$$s=s^{ak}x^{bk}=(s^ax^b)^k.$$ Clearly, $s^ax^b\in C(\mathbb R)$. This proves the claim.

Finally, to prove $P_k:C(\mathbb R)\to C(\mathbb R)$ is surjective, it is enough to prove $P_k$ is surjective on $C(\mathbb R)/C^0(\mathbb R)$. This follows since $s$ is a generator of $C/C^0$, and so for $C(\mathbb R)/C^0(\mathbb R)$.

$(2)\Rightarrow (3)$ This is obvious.

$(3)\Rightarrow (4)$ Let $g\in S(G(\mathbb R)).$ Then by Lemma \ref{L^*} there exists a Cartan subgroup $C$ of $G$ defined over $\mathbb R$ such that $g\in C(\mathbb R)$.
By hypothesis, there exists an element $h\in C'(\mathbb R)$ for some Cartan subgroup $C'(\mathbb R)$ such that $h^k=g$. Since all elements of $C'(\mathbb R)$
are semisimple, the assertion follows.

$(4)\Rightarrow (1)$ Since $S(G(\mathbb R))$ is dense in $G(\mathbb R)$, the assertion follows.
\qed

The following example is an application of Theorem \ref{T}:

Let $H=S^1\ltimes \mathbb{Z}/4\mathbb{Z}$ be a compact disconnected group defined by
$$(e^{i\alpha},\bar n)(e^{i\beta},\bar m):=(e^{i\alpha}(e^{i\beta})^{(-1)^n}, \bar n+\bar m)$$ for $\alpha, \beta\in \mathbb R$, $\bar n,\bar m\in\mathbb{Z}/4\mathbb{Z}.$
We can take the subgroup $\Gamma$ generated by $(e^{i\pi}, \bar{2})$ in $H$. 

Then $\Gamma$ is a subgroup of order $2$ and also it is normal in $H$. 

We consider $G=H/\Gamma$. 
So, $G/G^*$ is of order $2$.
Then $P_k(G)$ is dense in $G$ if and only if $(k,2)=1$ by Theorem 1.1 of \cite{Ma2}. 
On the other hand, we will prove the density of $P_k(G)$ in $G$ by making use of Theorem \ref{T}. 

First we 
note that, if we take a subgroup generated by $(-1, 1)$ then it is a Cartan subgroup $C$ of $G$ and $C$ is isomorphic to $\mathbb Z/4\mathbb Z$. Therefore there are Cartan subgroups of $G$ which are isomorphic to $\mathbb Z/4\mathbb Z$. Also, note that $S^1$ and its conjugates are Cartan subgroups $G$, and in fact any Cartan subgroup (upto conjugacy) is either $S^1$ or $\mathbb Z/4\mathbb Z$ (see Exercise 5, p-181 of \cite{BD}). Note that if $(k, 2)=1$, then $P_k$ is surjective on $\mathbb Z/4\mathbb Z$. Hence by Theorem \ref{T} we have $P_k(G)$ is dense in $G$ for $(k,2)=1$.

  \section{Over non-Archimedean local field}
  An automorphism $\alpha$ of a locally compact group $G$ is called distal if $e$ is not a limit point of $\{\alpha^n(x)|~n\in \mathbb N\}$ for all $x\in G\setminus\{e\}$.
  
  It is straightforward to see that $\alpha\in {\rm GL}(V)$ is distal if and only if each eigenvalue of $\alpha$ in an extension field is of absolute value $1$.
  
  The following lemma follows from Lemma 3.1 and Proposition 3.1 of \cite{MR}. However, we include a short proof for completeness.
  \begin{lemma}\label{distal}
  	Let $\mathbb F$ be a non-Archimedean local field and $V$ be a finite dimentional vector space over $\mathbb F$. Let $p$ be the residual characteristic of $\mathbb F$.
  	Let $G\subset GL(V)$. Suppose that $g\in G$ has all $p^k$-th roots for any $k\in\mathbb N$. Then every eigenvalue of $g$ (in some finite extension of $\mathbb F$) has modulus $1$.
  \end{lemma}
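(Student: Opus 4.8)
The plan is to reduce the statement to eigenvalues and then exploit that the valuation of $\mathbb F$ extends uniquely to the algebraic closure $\overline{\mathbb F}$, so that the valuations of eigenvalues of a fixed-size matrix have uniformly bounded denominators. Write $d=\dim_{\mathbb F}V$, and let $v$ be the normalized valuation on $\mathbb F$ (value group $\mathbb Z$), extended uniquely to $\overline{\mathbb F}$; this uses that a local field is complete. The associated absolute value satisfies $|x|=1$ if and only if $v(x)=0$, so it suffices to prove $v(\lambda)=0$ for every eigenvalue $\lambda$ of $g$ (note $\lambda\ne 0$ since $g$ is invertible).

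The first ingredient I would record is a bounded-ramification observation. Any eigenvalue $\alpha$ of an element of $GL(V)$ is a root of a degree-$d$ polynomial over $\mathbb F$, hence $[\mathbb F(\alpha):\mathbb F]\le d$. The standard formula $|\alpha|=|N_{\mathbb F(\alpha)/\mathbb F}(\alpha)|^{1/[\mathbb F(\alpha):\mathbb F]}$ for the unique extension of the absolute value—valid over any complete valued field, irrespective of separability, which matters here since $\mathbb F$ may have positive characteristic—gives $[\mathbb F(\alpha):\mathbb F]\,v(\alpha)=v\!\left(N_{\mathbb F(\alpha)/\mathbb F}(\alpha)\right)\in\mathbb Z$. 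Therefore $v(\alpha)\in\frac{1}{[\mathbb F(\alpha):\mathbb F]}\mathbb Z\subseteq\frac1{d!}\mathbb Z$. In particular every eigenvalue of $g$ and of each $x_k$ has valuation whose denominator divides $d!$.

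Next I would set up the eigenvalue correspondence coming from the roots. For each $k$ choose $x_k\in G\subseteq GL(V)$ with $x_k^{p^k}=g$. Triangularizing $x_k$ over $\overline{\mathbb F}$, the eigenvalues of $g=x_k^{p^k}$ are exactly the $p^k$-th powers of the eigenvalues of $x_k$; so for a fixed eigenvalue $\lambda$ of $g$ there is, for each $k$, an eigenvalue $\mu_k$ of $x_k$ with $\mu_k^{\,p^k}=\lambda$, whence $v(\mu_k)=v(\lambda)/p^k$. Combining this with the previous step, $d!\,v(\mu_k)\in\mathbb Z$ forces $d!\,v(\lambda)=p^k\,(d!\,v(\mu_k))\in p^k\mathbb Z$ for every $k\in\mathbb N$. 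Since $d!\,v(\lambda)$ is a fixed integer divisible by arbitrarily large powers of $p$, it must vanish, giving $v(\lambda)=0$, i.e. $|\lambda|=1$. As $\lambda$ was arbitrary, the lemma follows.

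The argument is short, and I do not expect a genuine obstacle; the two points to handle with care are the uniqueness of the extension of $v$ to $\overline{\mathbb F}$ (which, via the norm formula, underlies the denominator bound and works in any characteristic), and the fact that the degree bound $d$ is \emph{uniform} in $k$. The latter is really the heart of the matter: the denominator of $v(\mu_k)$ is forced to stay at most $d!$, whereas $v(\lambda)/p^k$ would have denominator growing like $p^k$ unless $v(\lambda)=0$. It is precisely this clash for large $k$ that delivers the conclusion.
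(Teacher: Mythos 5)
Your proof is correct, but it takes a genuinely different route from the paper's. The paper argues dynamically: it cites Lemma 3.1 of \cite{MR} to produce a compact open subgroup $K$ of $(V,+)$ with $g(K)=K$ (this is where the existence of all $p^k$-th roots is used), scales $K$ by elements $a_n\to 0$ of $\mathbb F$ to show that no nonzero $x\in V$ satisfies $g^n(x)\to 0$, i.e.\ that $g$ is distal, and then invokes the standard fact (cf.\ \cite{CoG}) that a distal element of $GL(V)$ has all eigenvalues of absolute value $1$. Your argument replaces this by a self-contained valuation computation: the unique extension of the normalized valuation $v$ to $\overline{\mathbb F}$ (valid, via the norm formula, for any finite and possibly inseparable extension of a complete field) gives the uniform bound $d!\,v(\mu)\in\mathbb Z$ for every eigenvalue $\mu$ of every element of $GL(V)$ with $d=\dim V$, and the relation $v(\lambda)=p^k\,v(\mu_k)$ coming from $x_k^{p^k}=g$ then forces $d!\,v(\lambda)\in p^k\mathbb Z$ for all $k$, hence $v(\lambda)=0$. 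Both of the delicate points you flag are handled correctly, and the uniformity of the degree bound in $k$ is indeed the crux. Your route is more elementary and avoids the external lemma; it also shows that the hypothesis that $p$ be the residual characteristic is irrelevant for this particular conclusion, since the same clash of denominators works for $q^k$-th roots for any integer $q\ge 2$. What the paper's route buys is coherence with the toolkit of \cite{MR} (compact open subgroups and distal actions), which drives the rest of that section.
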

  \begin{proof}
  	By Lemma 3.1 of \cite{MR}, we note that $g$ fixes a compact open subgroup $K$ of $V$. Since $\mathbb F$ is non discrete, we get a sequence $\{a_n\}$ in $\mathbb F$ such that $\{a_n\}\to 0$. As $g$ is linear, we have $g(a_nK)=a_ng(K)=a_nK$. 
  	
  	Let $x\in V$ be such that $g^n(x)\to 0$, and $a_nK$ be an open set containing $0$, we have (along subsequence if necessary) $g^n(x)\in a_nK$ for all $n$. Therefore, $x\in g^{-n}(a_nK)=a_nK$ for all $n$. Since $a_n\to 0$, we have $x=0$. This implies that $0$ is not a limit point in $V$, and hence $g$ is distal. Hence every eigenvalue of $g$ has modulus $1$.
  \end{proof}
  
  \noindent{\it Proof of Theorem~\ref{power is unipotent}}: 
  Let $p$ be the characteristic of the residue field of $\mathbb F$. It is given that for any $k\in\mathbb N$ and a prime $p$, there exists a $p^k$-th root of $g$ in $G(\mathbb F)$. Then $g$ is distal by Lemma \ref{distal}. Now $G(\mathbb F) \subset {\rm GL}(V)$, where $V$ is a finite dimensional vector space, say of dimension $n$. Then the degree of the characteristic polynomial of every element of ${\rm GL}(V)$ is $n$. 
  
  {\bf Case 1:} Now let us consider first the case where $\mathbb F$ is of characteristic $0$. Then we know that $\mathbb F$ has only finitely many extensions of any given degree (see Proposition 14, page-54 of \cite{L}). 
  Let $\mathbb K$ be the composite of all the degree $\leq n$ extensions of $\mathbb F$ contained in a fixed algebraic closure $\bar{\mathbb F}$ of $\mathbb F$. Then $\mathbb K$ is a finite extension of $\mathbb F$. 
  
  Via this construction of $\mathbb K$ we observe that eigenvalues of $g$ and its all roots belong in $\mathbb K$.
  
  Let $\mathcal O$ be its ring of integers of $\mathbb K$ and $\mathcal P$ be the unique prime ideal of $\mathcal O$.
  Since $g$ has all eigenvalues of absolute value $1$ by Lemma \ref{distal}, eigenvalues of $g$ lie in $\mathcal O$. Moreover, eigenvalues of $g$ are units in $\mathcal O$ as $g^{-1}$ also has all $p^k$-th roots, hence they lie in $\mathcal O^*$. If $\mathbb L$ is the residue field of $\mathbb K$, we know that
  $$\mathcal O^*/(1+\mathcal P)\simeq\mathbb L^*.$$
  As the residue field of $\mathbb K$ is finite, let us assume that ${\rm Ord}(\mathbb L^*)=r$.
  Note that the eigenvalues of $g^r$ belong to $1+\mathcal P$. Let $h$ be a $p^k$-th root of $g$ in $G(\mathbb F)$. As $h\in G(\mathbb F)\subset GL(V)$, we note that the eigenvalues of $h$ are in $\mathbb K$, and the absolute value of the eigenvalues of $h$ are $1$. 
  
  Moreover, the eigenvalues of $h^r$ are also contained in $1+\mathcal P$. But as $g^r = (h^r)^{p^k}$, the eigenvalues of $g^r$ are contained in $(1+\mathcal P)^{p^k}$. This is true for all integers $k$. Since, $k\in\mathbb N$ the intersection of $(1+\mathcal P)^{p^k}$ is just $\{1\}$, we get that every eigenvalue of $g^r$ is $1$. Hence $g^r$ is unipotent. 
  
  {\bf Case 2:} Now let us assume that $\mathbb F$ is of characteristic $p$. Let $\mathbb F'$ be a fixed finite field extension of $\mathbb F$ containing all the eigenvalues of $g$. We recall that for any integer $r$, there is a unique purely inseparable extension of $\mathbb F'$ of degree $p^r$. Let $r$ be chosen to be the largest integer such that $p^r\leq n$ and $\mathbb K$ be the purely inseparable extension of $\mathbb F'$ of degree $p^r$. Let $h$ be a $p^k$-th root of $g$ in $G(\mathbb F)$, and $\lambda$ be an eigenvalue of $g$. Then there exists an eigenvalue $\mu$ of $h$ such that $\mu^{p^k} = \lambda$. As $\lambda$ is contained in $\mathbb F'$, we see that $\mu$ is contained in a purely inseparable extension of $\mathbb F'$. But since the characteristic polynomial of $h$ is of degree $n$ and has coefficients in $\mathbb F$, we see that $\mu$ is contained in an inseparable extension of degree at most $n$. This implies that $\mu$ is contained in $\mathbb K$.
  
  Since residue field of $\mathbb K$ is finite, replacing $g$ by a suitable finite power, we assume that the eigenvalues of $g$ belong to $1+\mathcal P$, where $\mathcal O$ is the ring of integers of $\mathbb K$ and $\mathcal P$ is the prime ideal of $\mathcal O$.
  Therefore, the eigenvalues (of a suitable power) of $h$  is contained in $1+\mathcal P$. 
  
  Then by an argument in the preceding paragraph we see that $\lambda$ belongs to $(1+\mathcal P)^{p^k}$. This is true for all $k$. Hence $\lambda = 1$, i.e., every eigenvalue of $g$ is $1$ and so $g$ is unipotent.
  \qed
  
  Using Theorem \ref{power is unipotent} we extend Corollary 1.7 of \cite{Ch2} and Corollary 1.4 of \cite{MR}. 
  
  \begin{corollary}\label{a fixed power}
  	Let $\mathbb F$ be a non-Archimedean local field with characteristic zero and with residual characteristic $p$. Let $G$ be a linear algebraic $\mathbb F$-group. Let $E$ be a set of elements $g\in G(\mathbb F)$ which have $k$-th roots in $G(\mathbb F)$ for all $k\in\mathbb N$. Then there exists $r\in\mathbb N$ such that for each $g\in E$, $g^r$ is unipotent. In particular, if $P_k(G(\mathbb F))=G(\mathbb F)$ for all $k\in\mathbb N$, then $G(\mathbb F)$ is a unipotent group.
  \end{corollary}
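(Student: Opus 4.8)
The plan is to revisit the proof of Theorem \ref{power is unipotent} in the characteristic-zero case and to observe that the exponent produced there can be chosen independently of $g$. First I would fix a faithful $\mathbb{F}$-embedding $G(\mathbb{F})\subset \GL(V)$ with $\dim V=n$, so that every element of $G(\mathbb{F})$ has characteristic polynomial of degree $n$ over $\mathbb{F}$. Since $\mathbb{F}$ has characteristic zero it admits only finitely many extensions of each degree, so the composite $\mathbb{K}$ of all extensions of $\mathbb{F}$ of degree at most $n$ in a fixed algebraic closure is a finite extension; let $\mathcal{O}$, $\mathcal{P}$, $\mathbb{L}=\mathcal{O}/\mathcal{P}$ be its ring of integers, maximal ideal and (finite) residue field, and set $r:=|\mathbb{L}^{*}|$. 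The key point, which is what upgrades Theorem \ref{power is unipotent} to a uniform statement, is that $\mathbb{K}$ and hence $r$ depend only on $\mathbb{F}$ and $n$, not on any particular element of $E$.

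Next I would run the eigenvalue argument of Theorem \ref{power is unipotent} for an arbitrary $g\in E$ with this fixed $r$. Since $g$ has $p^{k}$-th roots for every $k$, Lemma \ref{distal} gives that each eigenvalue $\lambda$ of $g$ has modulus $1$, hence lies in $\mathcal{O}^{*}$, and $\lambda\in\mathbb{K}$ because it is a root of the degree-$n$ characteristic polynomial of $g$. From $\mathcal{O}^{*}/(1+\mathcal{P})\simeq\mathbb{L}^{*}$ I get $\lambda^{r}\in 1+\mathcal{P}$, so the eigenvalues of $g^{r}$ lie in $1+\mathcal{P}$; choosing for each $k$ a $p^{k}$-th root $h$ of $g$ (whose eigenvalues again have modulus $1$ and lie in $\mathbb{K}$) and using $g^{r}=(h^{r})^{p^{k}}$, the eigenvalues of $g^{r}$ lie in $(1+\mathcal{P})^{p^{k}}$ for all $k$. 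As $\bigcap_{k}(1+\mathcal{P})^{p^{k}}=\{1\}$, every eigenvalue of $g^{r}$ is $1$, so $g^{r}$ is unipotent; since $r$ was fixed before choosing $g$, this proves the first assertion.

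For the final clause I would argue as follows. If $P_{k}(G(\mathbb{F}))=G(\mathbb{F})$ for all $k$, then every element has $k$-th roots for all $k$, so $E=G(\mathbb{F})$ and the uniform $r$ satisfies: $x^{r}$ is unipotent for all $x\in G(\mathbb{F})$. It then suffices to show that every semisimple $s\in G(\mathbb{F})$ is trivial, since then each element equals its unipotent part and $G(\mathbb{F})$ is unipotent. For semisimple $s$ the element $s^{r}$ is simultaneously semisimple and unipotent, hence $s^{r}=e$, so every semisimple element has order dividing the fixed $r$. Assuming some semisimple $s\ne e$, pick a prime $\ell$ dividing the order of $s$; using $E=G(\mathbb{F})$ together with perfectness of $\mathbb{F}$ (so that Jordan components of $\mathbb{F}$-points remain $\mathbb{F}$-points), I would extract the semisimple part of an $\ell$-th root of $s$ to obtain a semisimple $s_{1}\in G(\mathbb{F})$ with $s_{1}^{\ell}=s$ and $s_{1}^{r}=e$. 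Comparing orders shows $v_{\ell}(\mathrm{ord}(s_{1}))=v_{\ell}(\mathrm{ord}(s))+1$, where $v_{\ell}$ is the $\ell$-adic valuation; iterating produces semisimple elements whose orders have arbitrarily large $\ell$-adic valuation while all dividing the fixed $r$, a contradiction. Hence every semisimple element is trivial and $G(\mathbb{F})$ is unipotent.

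The first two paragraphs I expect to be routine, being essentially a rereading of the proof of Theorem \ref{power is unipotent} with attention to what $r$ depends on. The main obstacle will be the last paragraph: one must check that the repeatedly extracted $\ell$-th roots can genuinely be taken semisimple inside $G(\mathbb{F})$ and that the $\ell$-adic valuation of their orders is forced to strictly increase, which is what turns the fixed bound $r$ into a contradiction.
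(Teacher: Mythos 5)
Your proof of the first assertion is correct and is essentially the paper's own argument: both fix the composite $\mathbb K$ of all extensions of $\mathbb F$ of degree at most $n=\dim V$, observe that $r=|\mathbb L^*|$ depends only on $\mathbb F$ and $n$, and rerun the eigenvalue argument of Theorem \ref{power is unipotent} with this uniform $r$. Where you diverge is the ``in particular'' clause. The paper disposes of it in one line: since $P_r(G(\mathbb F))=G(\mathbb F)$, every element of $G(\mathbb F)$ is of the form $g^r$ with $g\in E=G(\mathbb F)$, hence is unipotent by the first assertion, so $G(\mathbb F)$ is unipotent. You instead reduce to showing that every semisimple element is trivial and then run an iterated root-extraction argument: a nontrivial semisimple $s$ would, via Jordan decomposition over the perfect field $\mathbb F$, yield semisimple $\ell$-th roots whose orders have strictly increasing $\ell$-adic valuation, contradicting the uniform bound $\mathrm{ord}(s)\mid r$. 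This argument is correct (the key facts you flag --- that the semisimple part of an $\ell$-th root of a semisimple element is again an $\ell$-th root and stays in $G(\mathbb F)$, and that $v_\ell$ of the order must increase --- do check out), but it is considerably heavier machinery than needed; the surjectivity of $P_r$ itself already hands you every element as an $r$-th power, which is exactly what the first assertion was set up to exploit. The only thing your longer route buys is that it isolates the statement ``every semisimple element of $G(\mathbb F)$ is trivial,'' which is of some independent interest but not required here.
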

  
  \begin{proof}
  	Note that, for each $n\in\mathbb N$ there exist only finitely many extensions over $\mathbb Q_p$ of degree $n$ (see Proposition 14, page-54 of \cite{L}). Hence, from the proof of Theorem \ref{power is unipotent}, it follows that there exists a fixed positive integer $r$ such that $g^r$ is unipotent for all $g\in G(\mathbb F)$. 
  	
  	Now, we observe that $$G(\mathbb F)=P_r(G(\mathbb F))=\{g^r|\;g\in G(\mathbb F)\}.$$ By above observation $P_r(G(\mathbb F))$ is unipotent, and hence $G(\mathbb F)$ is unipotent. 
  \end{proof}
  
  \begin{remark}
  	We note however that in characteristic $p>0$ case, $r$ depends on $g$.
  \end{remark}
  
  We now reprove Theorem 5.1, Corollaries 5.2 and 7.1 of \cite{Ch2}, by using Lemma 5.1 of \cite{Ch2} and Corollary \ref{a fixed power}.
  
  \begin{corollary}\label{1-parameter}
  	Let $G$ be a linear algebraic group over $\mathbb Q_p$. Let $g\in G(\mathbb Q_p).$ Then the following are equivalent:
  	
  	$(i)$ $g$ has $k$-th root in $G(\mathbb Q_p)$ for all $k$.
  	
  	$(ii)$ $g$ is unipotent $G(\mathbb Q_p)$.

  	$(iii)$ $g$ is contained in a one-parameter subgroup in $G(\mathbb Q_p)$.
  \end{corollary}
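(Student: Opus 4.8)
The plan is to prove the cycle of implications $(iii)\Rightarrow(i)\Rightarrow(ii)\Rightarrow(iii)$, reserving the bulk of the work for $(i)\Rightarrow(ii)$.

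The implication $(iii)\Rightarrow(i)$ is immediate: if $\Phi:\mathbb Q_p\to G(\mathbb Q_p)$ is a continuous homomorphism with $g=\Phi(t_0)$, then $\Phi(t_0/k)^k=\Phi(t_0)=g$ exhibits a $k$-th root of $g$ for every $k\in\mathbb N$. For $(ii)\Rightarrow(iii)$ I would use that $\mathbb Q_p$ has characteristic $0$: a unipotent $g$ is of the form $g=\exp X$ for a unique nilpotent $X$ in the Lie algebra of $G$ (the logarithm of a unipotent matrix being a finite polynomial, $X$ is defined over $\mathbb Q_p$), and then $\Phi(t)=\exp(tX)$ is a one-parameter subgroup with $\Phi(1)=g$. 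This is the substance of Lemma 5.1 of \cite{Ch2}.

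The heart of the matter is $(i)\Rightarrow(ii)$. Assuming $(i)$, in particular $g$ has $p^k$-th roots for every $k$, so Corollary \ref{a fixed power} (via Theorem \ref{power is unipotent}) provides an integer $r$, which may be taken to be the order of the multiplicative group of a finite residue field and hence coprime to $p$, such that $g^r$ is unipotent. Writing the Jordan decomposition $g=g_sg_u$, the relation $g^r=g_s^rg_u^r$ forces $g_s^r=e$; thus $g_s$ is a semisimple element of finite order $m\mid r$, with $m$ coprime to $p$. It remains to show $g_s=e$. Taking semisimple parts of a $k$-th root $x_k$ of $g$ gives $(x_k)_s^{\,k}=g_s$, so $g_s$ itself possesses semisimple $k$-th roots for all $k$.

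The main obstacle is to rule out a nontrivial torsion part $g_s$, and here I would exploit the finiteness of extensions of $\mathbb Q_p$ of bounded degree exactly as in the proof of Theorem \ref{power is unipotent}. Realising $G(\mathbb Q_p)\subset\mathrm{GL}(V)$ with $\dim V=n$, every eigenvalue of any element lies in an extension of $\mathbb Q_p$ of degree at most $n$. Suppose, for contradiction, that some eigenvalue of $g_s$ has order divisible by a prime $\ell$; since $m$ is coprime to $p$ we have $\ell\neq p$. Choosing for each $s$ a semisimple $\ell^s$-th root $y_s$ of $g_s$, one checks that $y_s$ has an eigenvalue $\nu$ that is a root of unity of $\ell$-power order at least $\ell^{\,s+1}$. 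Because $\ell\neq p$, the extension $\mathbb Q_p(\nu)/\mathbb Q_p$ is unramified with $[\mathbb Q_p(\nu):\mathbb Q_p]\to\infty$ as $s\to\infty$, contradicting $[\mathbb Q_p(\nu):\mathbb Q_p]\le n$ for large $s$. Hence $g_s$ has only the eigenvalue $1$, so $g_s=e$ and $g=g_u$ is unipotent. I expect this bounded-degree argument to be the delicate step, and it is precisely the mechanism encoded in Lemma 5.1 of \cite{Ch2}.
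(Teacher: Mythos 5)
Your proof is correct, and its outer implications match the paper's: $(iii)\Rightarrow(i)$ is the same one-line argument, and $(ii)\Rightarrow(iii)$ via the exponential of the nilpotent logarithm is exactly what the paper delegates to Lemma 5.1 of \cite{Ch2}. For the substantive implication $(i)\Rightarrow(ii)$ the two arguments share only the first step, namely invoking Theorem \ref{power is unipotent} and Corollary \ref{a fixed power} to make $g^r$ unipotent; after that they diverge. The paper passes to the Zariski closure $H=\overline{\langle g\rangle}$, uses Corollary 5.1 of \cite{Ch2} to locate roots of $g$ inside $H(\mathbb Q_p)$, and then kills the finite quotient $H(\mathbb Q_p)/\overline{\langle g^r\rangle}(\mathbb Q_p)$ by noting that a finite group on which the $q$-th power map is surjective for every prime $q$ must be trivial. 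You instead argue at the level of matrices via the Jordan decomposition: $g_s$ has finite order, inherits semisimple $\ell^s$-th roots for any prime $\ell$ dividing that order, and the resulting eigenvalues are roots of unity of $\ell$-power order at least $\ell^{s+1}$, which for large $s$ cannot lie in an extension of $\mathbb Q_p$ of degree at most $n=\dim V$; this forces $g_s=e$. Your route is self-contained, avoids the citation of Corollary 5.1 of \cite{Ch2}, and recycles the bounded-degree mechanism already used in the proof of Theorem \ref{power is unipotent}, at the cost of being somewhat longer; the paper's route is shorter but rests on an external root-descent result. One minor remark: the coprimality of $r$ to $p$ that you arrange is never needed, since your cyclotomic degree estimate also rules out $\ell=p$ (the extension $\mathbb Q_p(\zeta_{p^{s+1}})/\mathbb Q_p$ is ramified of degree $p^{s}(p-1)$, which likewise grows without bound).
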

  \begin{proof}
  	Let $H=\overline{<g>}(\mathbb Q_p)$ be the Zariski closure of the subgroup generated by $g$.

  	$(i)\Rightarrow (ii)$ By Theorem \ref{power is unipotent} and Corollary \ref{a fixed power}, there exists a fixed $r\in\mathbb N$ such that $g^r$ is unipotent. Since $g$ has a $k$-th root in $G(\mathbb Q_p)$ for every $k$, by Corollary 5.1 of \cite{Ch2}, we have $$P_{p^k}:\overline{<g>}(\mathbb Q_p)\to \overline{<g>}(\mathbb Q_p)$$ is surjective for all $p$. This induces a surjective map $$P_{p^k}: \overline{<g>}(\mathbb Q_p)/\overline{<g^r>}(\mathbb Q_p)\to \overline{<g>}(\mathbb Q_p)/\overline{<g^r>}(\mathbb Q_p)$$ for all $p$.
  	Note that, $\overline{<g>}(\mathbb Q_p)/\overline{<g^r>}(\mathbb Q_p)$ is finite and the order of the group is co-prime for each prime $p$, and hence $\overline{<g>}(\mathbb Q_p)/\overline{<g^r>}(\mathbb Q_p)$ is trivial. Therefore, $g$ is unipotent.
  	
  	$(ii)\Rightarrow (iii)$ This is obvious.
  	
  	$(iii)\Rightarrow (i)$ This is obvious.
  \end{proof}
  
  Let ${\rm Ch}\mathbb F=p>0$. Let $x\in G(\mathbb F)\subset {\rm GL}(V)$ be a unipotent element. Then all its eigenvalues are $1$. So all the eigenvalues of $(x-I)$, where $I$ is the identity in ${\rm GL}(V)$, are zero. Therefore there exists $n\in\mathbb N$ such that $(x-I)^n=0$.
  So, $(x-I)^{p^n}=0$. Hence $x^{p^n}=I$. That is, $x$ is an element of finite order. Conversely, if $x^{p^n}=I$ then $(x-I)^{p^n}=0$, and hence $x$ is unipotent. 
  
  To prove Theorem \ref{rts1}, we recall a useful
  lemma from \cite{BG} (see Lemma 5.5 of \cite{BG}).
  
  \begin{lemma}\label{torsion}
  	Let $\mathbb F$ be a non-Archimedean local field of arbitrary characteristic and $m$ be a positive integer. There is an integer $r$ such that the order of every torsion element in $GL(m,\mathbb F)$ divides $r$.  
  \end{lemma}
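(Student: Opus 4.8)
The plan is to reduce the statement to two separate boundedness facts: a uniform bound on the orders of the roots of unity that can occur as eigenvalues of an element of $GL(m,\mathbb F)$, and a uniform bound on the order of the unipotent part, the latter being relevant only in positive characteristic. First I would observe that if $g\in GL(m,\mathbb F)$ is torsion of order $N$, then its eigenvalues in $\bar{\mathbb F}$ are $N$-th roots of unity, and each eigenvalue $\lambda$ satisfies $[\mathbb F(\lambda):\mathbb F]\le m$, since it is a root of the degree-$m$ characteristic polynomial of $g$, which has coefficients in $\mathbb F$. To separate the semisimple and unipotent contributions without invoking perfectness of $\mathbb F$, I would write $N=p^aN'$ with $\gcd(N',p)=1$ (with $a=0$ in characteristic $0$, where $p$ plays no role) and work with the two commuting powers $g^{p^a}$ and $g^{N'}$, both of which lie in $GL(m,\mathbb F)$ and have orders $N'$ and $p^a$ respectively, so that $N=\mathrm{lcm}(\mathrm{ord}(g^{p^a}),\mathrm{ord}(g^{N'}))$. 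It then suffices to bound each factor uniformly.

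The element $g^{p^a}$ has order $N'$ coprime to $p$, hence is semisimple (its minimal polynomial divides the separable polynomial $X^{N'}-1$), with eigenvalues that are prime-to-$p$ roots of unity of degree $\le m$ over $\mathbb F$; its order is the lcm of the orders of these eigenvalues. Thus the key sub-lemma is: there is $M=M(\mathbb F,m)$ such that every root of unity $\lambda$ with $[\mathbb F(\lambda):\mathbb F]\le m$ has order dividing $M$. In the characteristic $0$ case ($\mathbb F$ a finite extension of $\mathbb Q_p$) this is immediate: by Proposition~14, p.~54 of \cite{L} there are only finitely many extensions $K/\mathbb F$ of degree $\le m$ inside $\bar{\mathbb F}$, each is a local field whose group of roots of unity $\mu(K)$ is finite, and $\lambda$ lies in one of them, so $M=\mathrm{lcm}_K|\mu(K)|$ works. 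In this case there is no unipotent part, since a torsion element in characteristic $0$ has separable minimal polynomial dividing $X^N-1$ and is therefore semisimple, and $r=M$ suffices.

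In the characteristic $p$ case I would establish the sub-lemma through residue fields rather than through finiteness of extensions. A root of unity $\lambda$ of order prime to $p$ generates a separable, in fact unramified, extension $\mathbb F(\lambda)/\mathbb F$, whose degree equals that of the corresponding residue-field extension; if $[\mathbb F(\lambda):\mathbb F]=d\le m$ and $\mathbb F$ has residue field $\mathbb F_q$, then the reduction $\bar\lambda$ lies in $\mathbb F_{q^d}$, so $\mathrm{ord}(\lambda)=\mathrm{ord}(\bar\lambda)$ divides $q^d-1$. Hence $M':=\mathrm{lcm}\{\,q^d-1 : 1\le d\le m\,\}$ bounds $N'=\mathrm{ord}(g^{p^a})$. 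For the unipotent part, $g^{N'}$ has $p$-power order, so $(g^{N'}-I)^{p^a}=(g^{N'})^{p^a}-I=0$ in characteristic $p$, i.e. $g^{N'}$ is unipotent; being an $m\times m$ unipotent matrix, $(g^{N'}-I)^m=0$, whence $(g^{N'})^{p^{k_0}}=I$ once $p^{k_0}\ge m$. Therefore $\mathrm{ord}(g^{N'})$ divides $p^{k_0}$, and I would conclude with $r=\mathrm{lcm}(M',p^{k_0})$.

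The step I expect to be the main obstacle is the characteristic $p$ analysis: one cannot simply quote finiteness of extensions of a given degree, and $\mathbb F$ is imperfect, so I must avoid asserting that the Jordan components of $g$ are individually defined over $\mathbb F$. Routing the semisimple part through the unramified residue-field extension, and the unipotent part through the bound on the nilpotency index, is the device that makes the bound uniform in $g$. Verifying that prime-to-$p$ roots of unity generate unramified extensions and that the residue degree controls their order is the technical heart of the argument.
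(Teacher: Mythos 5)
Your argument is correct, but the comparison with the paper is a degenerate one: the paper does not prove this lemma at all --- it simply quotes it as Lemma 5.5 of Breuillard--Gelander \cite{BG}. So what you have produced is a self-contained proof where the authors give only a citation. Your structure is sound: writing the order as $N=p^aN'$ with $\gcd(N',p)=1$ and bounding the two coprime factors separately is exactly the right decomposition, and it correctly sidesteps the imperfectness of $\mathbb F$ in positive characteristic by never invoking rationality of the Jordan components, only of the powers $g^{p^a}$ and $g^{N'}$, which manifestly lie in $GL(m,\mathbb F)$. The semisimplicity of $g^{p^a}$ (minimal polynomial dividing the separable $X^{N'}-1$), the identification of its order with the lcm of the orders of its eigenvalues, the bound $\mathrm{ord}(\lambda)\mid q^d-1$ via the unramifiedness of $\mathbb F(\lambda)/\mathbb F$ for prime-to-$p$ roots of unity and the injectivity of reduction on $\mu_{N'}$, and the nilpotency-index bound $(g^{N'}-I)^{p^{k_0}}=(g^{N'})^{p^{k_0}}-I=0$ once $p^{k_0}\ge m$ are all correct; likewise the characteristic-zero case, where torsion elements are automatically semisimple and the finiteness of extensions of bounded degree (the same fact the paper uses in its proof of Theorem \ref{power is unipotent}) together with finiteness of $\mu(K)$ for each local field $K$ gives the uniform bound directly. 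The only thing your write-up gains by comparison is transparency and an explicit value of $r$ (namely $\mathrm{lcm}\{q^d-1:d\le m\}\cdot p^{k_0}$ in characteristic $p$); the authors' choice buys brevity at the cost of an external dependence.
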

  
  Now we give a proof Theorem \ref{rts1}.
  
  \noindent{\it Proof of Theorem~\ref{rts1}}:
  $(i)$ We first note that, since $g\in G(\mathbb F)$, and $H$ is the Zariski-closure of the group generated by $g$, $H$ is defined over $\mathbb F$. Also, note that each $x_k\in Z_G(g)$, and $Z_G(g)$ is an $\mathbb F$-group. Since $H$ is a normal subgroup of $Z_G(g)$, we have $Z_G(g)/H$ is an $\mathbb F$ algebraic group. Therefore, we may assume $(Z_G(g)/H)(\mathbb F)\subset GL(m,\mathbb F)$ for some $m$.
  
  As $x_k ^{q^{k}} = g$ for each $k$, it follows that $x_k$ is a torsion element in $(Z_G(g)/H)(\mathbb F)\subset GL(m,\mathbb F)$. By Lemma \ref{torsion}
  there exists a fixed $r\in\mathbb N$ such that the order of $x_k$ divides $r$ for each $k$. Let $m$ be the largest integer such that $q^m$ is a factor for $r.$
  Since $x_k^{q^k}=I$ in $(Z_G(g)/H)$, we must have an order of $x_k$ divides $q^m$ for each $k$. 
  Since $x_k\in G(\mathbb F)$, we have $x_k^{q^m}\in H(\mathbb F)$ for any $k$.
  
  Let $g\in G(\mathbb F)$, and $k$ be a fixed natural number. We shall show that there exists $y_k\in H(\mathbb F)$ such that $y_k^{q^k}=g.$ Set $n=m+k$. By hypothesis, there exists $x_n\in G(\mathbb F)$ such that $x_n^{q^n}=g$. 
  This implies that $(x_n^{q^m})^{q^k}=g.$ Set $y_k:=x_n^{q^m}.$ Clearly,
  $y_k\in H(\mathbb F)$, and hence this proves the desired result.

  $(ii)$ We note that, since $g$ is of finite order, $H(\mathbb F)=\left\langle g\right\rangle $ and $P_{q^k}:H(\mathbb F)\to H(\mathbb F)$ is surjective by $(i)$. We conclude that the order of $g$ is co-prime to $q$.

  $(iii)$ If $g$ has $p^k$-th roots, then there exists $r\in\mathbb N$ such that $g^r$ is unipotent by Theorem \ref{power is unipotent}. If $\alpha\in GL(V)$ is unipotent then $\alpha^{p^l}=e$ for some $l$.
  Hence $g^{rp^l}=e$. So $g$ is of finite order, and hence $H(\mathbb F)=\left\langle g\right\rangle.$ 
  By part $(ii)$, we have $$P_{q^k}:H(\mathbb F) \to H(\mathbb F)$$ is surjective for each $k$ and for each prime $q$. This implies that $g=e$.
  \qed
  
  Using Theorem \ref{power is unipotent}, we will now make two remarks.
  
  \begin{remark}
  	Let $\mathbb F$ be a non-Archimedean local field (of any characteristic) and $G$ be a Lie group over $\mathbb F$. Suppose that $\Ad: G\to GL(L(G))$ be the adjoint map, and $g\in G$ has all $k$-th roots in $G$. Then by using Theorem \ref{power is unipotent}, we have some power of $\Ad(g)\in {\rm Aut}(L(G))$ is unipotent, i.e. a power of $g$ is $\Ad$-unipotent.
  \end{remark}

  \begin{remark}
  	Let $U$ be a unipotent algebraic group over a local field $\mathbb L$ of zero characteristic, and $G$ be an algebraic group over a local field $\mathbb K$ of positive characteristic. Then the image of any abstract homomorphism $\phi: U(\mathbb L)\to G(\mathbb K)$ is trivial.
  \end{remark}
  
  \section{Algebraic groups over global field}
  
  In this section, we extend the results in \S 3 to the linear algebraic group over the global field.
  By a global field $\mathbb K$, we mean a finite extension either of $\mathbb Q$ or $\mathbb F_p(t)$.
  
  \begin{corollary}\label{a fixed power global}
  	Let $G$ be an algebraic group over a global field $\mathbb K$ with characteristic zero. Let $p$ be a prime number. Let $E$ be a set of elements $g\in G(\mathbb K)$ such that it has $p^k$-th roots in $G(\mathbb K)$ for all $k\in\mathbb N$. Then there exists $r\in\mathbb N$ such that for each $g\in E$, $g^r$ is unipotent. In particular, if $P_k(G(\mathbb K))=G(\mathbb K)$ for all $k\in\mathbb N$, then $G(\mathbb K)$ is a unipotent group.
  \end{corollary}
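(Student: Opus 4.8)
The plan is to reduce to the local statement already established in Corollary~\ref{a fixed power} by completing the global field at a well-chosen place. Since $\mathbb K$ has characteristic zero it is a number field, so I would fix a finite place $v$ of $\mathbb K$ lying over the rational prime $p$ and pass to the completion $\mathbb K_v$. Then $\mathbb K_v$ is a non-Archimedean local field of characteristic zero whose residue field is a finite extension of $\mathbb F_p$, hence of residual characteristic exactly $p$; and the inclusion $\mathbb K\hookrightarrow\mathbb K_v$ induces $G(\mathbb K)\subset G(\mathbb K_v)$ (after base change of $G$ to $\mathbb K_v$). The point of choosing $v\mid p$ is precisely to arrange that the residual characteristic matches the prime $p$ governing the roots.

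First I would transport the hypothesis to the local group. For $g\in E$ the elements realizing the $p^k$-th roots of $g$ lie in $G(\mathbb K)\subset G(\mathbb K_v)$, so $g$ has $p^k$-th roots in $G(\mathbb K_v)$ for every $k\in\mathbb N$. Applying Theorem~\ref{power is unipotent} over $\mathbb K_v$ then shows that some power of $g$ is unipotent in $G(\mathbb K_v)$. Moreover, exactly as in the proof of Corollary~\ref{a fixed power} (which extracts the exponent from the proof of Theorem~\ref{power is unipotent} and uses only the existence of $p^k$-th roots), the fact that $\mathbb K_v$ admits only finitely many extensions of each given degree (Proposition~14, p.\,54 of \cite{L}) lets one choose the exponent uniformly: there is a single $r\in\mathbb N$, depending only on $G$, $\mathbb K_v$ and $p$ and not on the individual $g$, such that $g^r$ is unipotent in $G(\mathbb K_v)$ for all $g\in E$.

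Next I would descend unipotency from $\mathbb K_v$ back to $\mathbb K$. Fixing a faithful $\mathbb K$-rational embedding $G\hookrightarrow\mathrm{GL}(V)$, unipotency of $g^r$ is equivalent to nilpotence of $g^r-I$, i.e.\ to the vanishing of a fixed power $(g^r-I)^{N}$ with $N=\dim V$. This is a condition on matrix entries lying in $\mathbb K\subset\mathbb K_v$, so it holds in $G(\mathbb K_v)$ if and only if it holds in $G(\mathbb K)$. Hence $g^r$ is unipotent in $G(\mathbb K)$ for every $g\in E$, which is the first assertion.

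For the final statement, if $P_k(G(\mathbb K))=G(\mathbb K)$ for all $k$, then in particular every element has $p^k$-th roots for all $k$, so $E=G(\mathbb K)$ and the uniform $r$ applies to every element. Taking $k=r$ gives $G(\mathbb K)=P_r(G(\mathbb K))=\{g^r:g\in G(\mathbb K)\}$, which consists entirely of unipotent elements, whence $G(\mathbb K)$ is a unipotent group. I expect the only genuine point to be the \emph{uniformity} of $r$ over all of $E$; this, however, is inherited verbatim from the local argument, the residual field being finite and the degrees of the relevant extensions bounded by $\dim V$. The descent of unipotency is then immediate, unipotency being a Zariski-closed, field-independent condition.
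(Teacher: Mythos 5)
Your argument is correct and follows essentially the same route as the paper: complete $\mathbb K$ at a place over $p$, apply the local result (Theorem~\ref{power is unipotent} together with the uniformity of $r$ from Corollary~\ref{a fixed power}), and descend unipotency back to $\mathbb K$. You are in fact slightly more careful than the paper, which cites Corollary~\ref{a fixed power} directly even though that statement assumes $k$-th roots for all $k$ rather than just $p^k$-th roots; your observation that the local proof only uses $p^k$-th roots (with $p$ the residual characteristic, hence the choice $v\mid p$) is exactly the point needed to make the citation legitimate.
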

  
  \begin{proof}
  	Let $\mathbb K_p$ be the $p$-adic completion for $\mathbb K$. Then $G(\mathbb K)\subset G(\mathbb K_p)$ for any $p$. Let $p$ be a fixed prime. By Corollary \ref{a fixed power}, there exists $r$ (depending on $p$) such that $g^{r}$ is unipotent in $G(\mathbb K_p)$. Hence, $g^{r}$ is unipotent in $G(\mathbb K)$, as $g\in G(\mathbb K).$  
  \end{proof}
  
  We now extend Lemma \ref{torsion} on general linear group over a global field in Lemma \ref{torsion global field}, which is useful to prove Corollary \ref{rts2}.

  \begin{lemma}\label{torsion global field}
  	Let $\mathbb K$ be a global field of positive characteristic. Then there is an integer $r$ such that the order of every torsion element in $GL(m,\mathbb K)$ divides $r$.  
  \end{lemma}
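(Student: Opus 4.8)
The plan is to reduce this global statement to its already-established local counterpart, Lemma \ref{torsion}, by completing $\mathbb{K}$ at a place. Since $\mathbb{K}$ is a global field of positive characteristic $p$, it is a finite extension of $\mathbb{F}_p(t)$, hence an algebraic function field in one variable over a finite field. Such a field carries non-Archimedean places, and the completion $\mathbb{K}_v$ at any place $v$ is a non-Archimedean local field of characteristic $p$ with finite residue field, so it is exactly the kind of field to which Lemma \ref{torsion} applies.

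First I would fix a place $v$ of $\mathbb{K}$ and record the field embedding $\mathbb{K}\hookrightarrow\mathbb{K}_v$. This induces an injective group homomorphism $GL(m,\mathbb{K})\hookrightarrow GL(m,\mathbb{K}_v)$. Such an embedding preserves the order of every element: a relation $g^n=I$ over $\mathbb{K}$ remains valid over $\mathbb{K}_v$, and conversely $g^d=I$ over $\mathbb{K}_v$ forces $g^d=I$ over $\mathbb{K}$ by injectivity, so the order is unchanged. In particular, each torsion element of $GL(m,\mathbb{K})$ maps to a torsion element of $GL(m,\mathbb{K}_v)$ of the same order.

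Next I would invoke Lemma \ref{torsion} for the local field $\mathbb{K}_v$: it furnishes an integer $r$ such that the order of every torsion element of $GL(m,\mathbb{K}_v)$ divides $r$. Combining with the previous step, the order of every torsion element of $GL(m,\mathbb{K})$ divides this same $r$, which is the desired conclusion.

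The proof has no serious obstacle; the only points needing care are that a global function field genuinely admits a completion that is a non-Archimedean local field, so that Lemma \ref{torsion} applies verbatim, and that passing to the completion does not alter the order of a torsion element. If one instead wanted a self-contained argument avoiding completions, the main work would be to bound the semisimple part via the constant field: writing the multiplicative Jordan decomposition $g=g_s g_u$, the unipotent part satisfies $(g_u-I)^m=0$, so its order divides $p^B$ for any $p^B\ge m$, while the eigenvalues of $g_s$ are roots of unity of order prime to $p$ lying in an extension of $\mathbb{K}$ of degree at most $m$; the field of constants of any such bounded-degree extension is a finite field of bounded degree over the constant field $\mathbb{F}_{q_0}$ of $\mathbb{K}$, forcing the order of $g_s$ to divide $q_0^{L}-1$ with $L=\mathrm{lcm}(1,\dots,m)$. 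The genuinely delicate point on that route is precisely the uniform bound on the constant field of bounded-degree extensions, which the completion argument sidesteps entirely, and this is why I would prefer the reduction to Lemma \ref{torsion}.
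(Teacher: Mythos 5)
Your proposal is correct and follows the same route as the paper: fix a place $v$, embed $GL(m,\mathbb K)$ into $GL(m,\mathbb K_v)$ for the completion $\mathbb K_v$ (a non-Archimedean local field of characteristic $p$), and apply Lemma \ref{torsion} there. The additional care you take about orders being preserved under the embedding, and the sketched self-contained alternative, go beyond the paper's brief argument but do not change the approach.
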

  
  \begin{proof}
  	Let $\mathbb K_v$ denote a non-Archimedean local field of characteristic $p>0$ obtained by a completion of $K$. Then we have $GL(m,\mathbb K)\subset GL(m,\mathbb K_v).$ Let $x\in GL(m,\mathbb K)$ be a torsion element. Then it is torsion in $GL(m,\mathbb K_v).$ Hence there exists an integer $r$ such that ${\rm Ord}(x)$ divides $r$ by Lemma \ref{torsion}. This proves the lemma.
  \end{proof}
  
  \begin{remark}
  	Let $\mathbb K$ be a global field of positive characteristic and $G$ be a linear algebraic $\mathbb K$-group. From 
  	Lemma \ref{torsion global field}, there is an integer $r$ such that the order of every torsion element in $G(\mathbb K)$ divides $r$.  
  \end{remark}
  
  \begin{corollary}\label{rts2}
  	Let $G$ be an algebraic group over a global field $\mathbb K$ of
  	characteristic $p>0$ and let $g\in G(\mathbb K)$. Let $q$ be any prime number. Suppose for each $k$ there exists $x_k \in G(\mathbb K)$ such that
  	$x_k ^{q^{k}} = g$. Let $H$ be the Zariski-closure of the
  	subgroup generated by $g$. Then 
  	
  	$(i)$ there exists $y_k \in H(\mathbb K)$ such that $y_k ^{q^{k}}=g$.
  	
  	$(ii)$ if the order of $g$ is finite, the order of $g$ is co-prime to $q$.
  	
  	$(iii)$ if $g$ has $k$-th roots in $G(\mathbb K)$ for all $k$, then $g=e$.
  \end{corollary}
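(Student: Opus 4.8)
The plan is to transplant the proof of Theorem \ref{rts1} to the global setting almost verbatim, using Lemma \ref{torsion global field} in place of Lemma \ref{torsion} and, for the final part, descending to a local completion so that Theorem \ref{power is unipotent} becomes available.

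For part $(i)$ I would first observe that $H$ is defined over $\mathbb K$, and that each $x_k$ lies in the centralizer $Z_G(g)$ (indeed $x_k$ commutes with $g=x_k^{q^k}$), which is again a $\mathbb K$-group in which $H$ is normal, since every element of $Z_G(g)$ commutes with all powers of $g$ and hence with $H=\overline{\langle g\rangle}$. Thus the quotient $Z_G(g)/H$ is a linear algebraic $\mathbb K$-group and I may realize $(Z_G(g)/H)(\mathbb K)$ as a subgroup of some $GL(m,\mathbb K)$. Because $x_k^{q^k}=g\in H$, the image $\bar x_k$ of $x_k$ in $Z_G(g)/H$ is a torsion element whose order divides $q^k$. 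By Lemma \ref{torsion global field} there is a fixed $r$ bounding the orders of all torsion elements of $GL(m,\mathbb K)$; letting $m_0$ be the $q$-adic valuation of $r$, the order of $\bar x_k$ divides $\gcd(q^k,r)=q^{\min(k,m_0)}$, so $x_k^{q^{m_0}}\in H(\mathbb K)$ for every $k$. Then for a prescribed $k$ I would set $n=m_0+k$, take $x_n$ with $x_n^{q^n}=g$, and put $y_k:=x_n^{q^{m_0}}\in H(\mathbb K)$, which satisfies $y_k^{q^k}=x_n^{q^n}=g$.

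For part $(ii)$, when $g$ has finite order the Zariski closure $H$ equals the finite cyclic group $\langle g\rangle$, and part $(i)$ shows that $g$ lies in the image of $P_{q^k}$ on $\langle g\rangle$. Since the image of a power map on an abelian group is a subgroup, containing the generator $g$ forces $P_{q^k}$ to be surjective on $\langle g\rangle$, which for a cyclic group of order $N$ is possible only when $\gcd(q,N)=1$; hence the order of $g$ is coprime to $q$. For part $(iii)$, the crucial extra input is to establish that $g$ has finite order. Here I would embed $G(\mathbb K)\subset G(\mathbb K_v)$ for a completion $\mathbb K_v$ of $\mathbb K$, a non-Archimedean local field of characteristic $p$. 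Since $g$ has $p^k$-th roots in $G(\mathbb K)\subset G(\mathbb K_v)$ for all $k$, Theorem \ref{power is unipotent} yields that some power $g^s$ is unipotent in $G(\mathbb K_v)$; unipotence is intrinsic, so $g^s$ is already unipotent in $G(\mathbb K)$, and in characteristic $p$ this gives $g^{sp^l}=e$ for some $l$. Thus $g$ has finite order, and applying part $(ii)$ to every prime $q$ shows the order of $g$ is coprime to all primes, hence equal to $1$, so $g=e$.

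I expect the main obstacle to be the bookkeeping in part $(iii)$: one must ensure that the availability of $p^k$-th roots descends to the completion $\mathbb K_v$ and that unipotence transfers back to $G(\mathbb K)$, because Lemma \ref{torsion global field} by itself does not produce finiteness of the order of $g$ — it only controls torsion orders once finiteness is already known, which is exactly what the passage to $\mathbb K_v$ supplies. Everything else is a faithful repetition of the argument for Theorem \ref{rts1}.
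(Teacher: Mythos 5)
Your proposal is correct and follows essentially the same route as the paper: part $(i)$ transplants the proof of Theorem \ref{rts1}$(i)$ with Lemma \ref{torsion global field} replacing Lemma \ref{torsion}, part $(ii)$ repeats the surjectivity-on-the-finite-cyclic-group argument, and part $(iii)$ passes to a completion $\mathbb K_v$ to invoke Theorem \ref{power is unipotent} and deduce finite order before applying $(ii)$ for all primes. If anything, your write-up is organized more cleanly than the paper's, which places the finite-order argument inside the proof of $(ii)$ rather than $(iii)$ where it is actually needed.
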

  
  \begin{proof}
  	$(i)$ Let $\Bar{\mathbb K}$ be the algebraic closure of $\mathbb K$. Then $H=\overline{<g>}^{\Bar{\mathbb K}}.$
  	Clearly, $x_k^{q^k}=I$ in 
  	$(Z_G(g)/H)(\mathbb K)\subset GL(V)$. By following the argument as in Theorem \ref{rts1}, we may conclude that there exists $y_k\in H(\mathbb K)$ such $y_k^{q^k}=g$ by Lemma \ref{torsion global field}. Hence we omit the details.
  	
  	$(ii)$ 
  	Suppose the characteristic of $\mathbb K$ is $p>0$. Let $\mathbb K_v$ denote a non-Archimedean local field of characteristic $p>0$ obtained by a completion of $K$.  Since $g\in G(\mathbb K)\subset G(\mathbb K_v)$, and it has all $k$-th roots in $G(\mathbb K)$, and hence in $G(\mathbb K_v)$, by Theorem \ref{power is unipotent}, some power of $g$ is unipotent in $G(\mathbb K_v)$. Hence a power of $g$ is unipotent in $G(\mathbb K)$. This implies $g$ is of finite order, and hence $(ii)$ holds.
  	
  	$(iii)$ Since $P_{q^k}:H(\mathbb K)\to H(\mathbb K)$ is surjective for all primes $q$ and $H(\mathbb K)$ is finite, we have the result.
  \end{proof}

\noindent
{\bf Acknowledgements:}
The second named author is grateful to G. Prasad for the email correspondence about the proof of Theorem \ref{power is unipotent}. I would also like to thank
P. Chatterjee for suggesting the statement of Theorem \ref{T}. Both Authors would like to thank C. R. E. Raja, Riddhi Shah, S. G. Dani for many useful discussion and comments. 
\medskip

\noindent
{\bf Conflict of interest:}
On behalf of all authors, the corresponding author states that there is no conflict of interest.

\medskip

\vskip2mm

\begin{flushleft}
Parteek Kumar and Arunava Mandal\\
Department of Mathematics\\
Indian Institute of Technology \\
Roorkee, Uttarakhand 247667, India\\

%\medskip
E-mail: parteekk@ma.iitr.ac.in and  arunava@ma.iitr.ac.in

\end{flushleft}

\end{document}